\documentclass[11pt,reqno]{amsart}
\usepackage{amssymb}
\usepackage{txfonts}
\usepackage{amsfonts}
\usepackage{amsmath}
\usepackage{graphicx}
\usepackage{hyperref}
\usepackage{float}
\usepackage{epstopdf}
\usepackage{color}

\setcounter{MaxMatrixCols}{10}

\allowdisplaybreaks
\newtheorem{theorem}{Theorem}[section]
\newtheorem{proposition}[theorem]{Proposition}
\newtheorem{lemma}[theorem]{Lemma}
\newtheorem{corollary}[theorem]{Corollary}

\newtheorem{definition}[theorem]{Definition}

\def\diam{\mathrm{diam}}
\def\CH{\mathcal{CH}}

\begin{document}

\title{Metrics on the Sierpinski carpet by weight functions}
\author{Qingsong Gu}
\address{Department of Mathematics\\ The Chinese University of Hong Kong, Hong Kong, China}
\email{001gqs@163.com}

\author{Hua Qiu}
\address{Department of Mathematics, Nanjing Univeristy, Nanjing 210093, China}
\email{huaqiu@nju.edu.cn}
\thanks{The research of Qiu was supported by the NSFC grant 11471157}

\author{Huo-Jun Ruan}
\address{School of Mathematical Science, Zhejiang University, Hangzhou 310027,
China} \email{ruanhj@zju.edu.cn}
\thanks{The research of Ruan was supported in part by
the NSFC grants 11271327, 11771391, and by ZJNSFC grant LR14A010001.}


\subjclass[2010]{Primary 28A80}

\date{}


\keywords{weight function, Sierpinski carpet, chain condition, heat kernel, quasisymmetry.}

\begin{abstract}
We construct certain metrics on the Sierpinski carpet via a class of self-similar weight functions. Using these metrics and by applying known results, we obtain the two-sided sub-Gaussian heat kernel estimates of time change of the standard diffusion on the Sierpinski carpet with respect to self-similar measures. This proves a conjecture by Kigami.
\end{abstract}
\maketitle

\section{Introduction}\label{intro}
\setcounter{equation}{0}
Barlow and Bass created a successful theory about diffusions on the \textit{Sierpinski carpet} (denoted by $K$) in a  series of papers \cite{BB89,BB90-1,BB90-2,BB92,BB93,BB99}. They constructed a natural diffusion (also called the Brownian motion) on $K$, which is invariant under all the local isometries of $K$, and proved that this process has a transition probability, namely the \textit{heat kernel} $p_t(x,y)$, with respect to the normalized Hausdorff measure $\nu$ on $K$. Moreover, they obtained that $p_t(x,y)$ satisfies the two-sided sub-Gaussian estimates, i.e.,
\begin{equation*}
p_t(x,y)\asymp \frac{1}{t^{\alpha/\beta}}\exp\left(-c\left(\frac{|x-y|}{t^{1/\beta}}\right)^{\frac{\beta}{\beta-1}}\right),
\end{equation*}
where $\alpha=\frac{\log8}{\log3}$ is the Hausdorff dimension of $K$, and $\beta$ is called the walk dimension.
Subsequently, Kusuoka and Zhou\cite{KZ} gave a different method to  construct a diffusion on $K$, satisfying the self-similar identity, which has the same invariant properties as that of Barlow and Bass. Later, the uniqueness result of Barlow, Bass, Kumagai and Teplyaev \cite{BBKT} ensures that the different constructions yield a unique diffusion on $K$ up to scalar constants. Let $(\mathcal E,\mathcal F)$ be the associated local regular \textit{Dirichlet form} on $L^2(K,\nu)$. Let $\{F_i\}_{i=1}^8$ be the iterated function system associated with $K$. Then $(\mathcal E,\mathcal F)$ satisfies the following self-similar identity:
there is a constant $\rho>0$, for any $1\leq i\leq 8$, for any $u\in\mathcal F$, $u\circ F_i\in \mathcal F$, and
\begin{equation*}
\mathcal E(u)=\frac1\rho\sum_{i=1}^8\mathcal E(u\circ F_i),
\end{equation*}
where $\rho$ is called the \textit{renormalization factor} of $(\mathcal E,\mathcal F)$. It is known \cite{BB90-2} that $\rho$ is between $1.25147$ and $1.25149$.
Barlow and Bass also showed that the above mentioned results work for a class of infinitely ramified fractals, called the \textit{generalized Sierpinski carpets} (GSC) which includes the Sierpinski carpet as a special case. Please see \cite{BB99} for the definition of GSC.


There are many extensions of this theory. Barlow and Kumagai studied the time change of the process via self-similar measures in \cite{BK}. For a self-similar measure $\mu$ on a generalized Sierpinski carpet, they showed that the time change is possible if and only if $\mu_i\rho<1$ for all $i$'s, where $\mu_i$ is the $i$-th weight of $\mu$. In a series of papers \cite{Kig04,Kig09,Kig12,Kig18}, Kigami studied under what conditions can one have nice heat kernel bounds (e.g. two-sided sub-Gaussian) of a time change process on certain self-similar fractals (including \textit{p.c.f. self-similar sets} \cite{Kig01} and the generalized Sierpinski carpets). A necessary condition is requiring the measure $\mu$ to be volume doubling with respect to the resistance metric, or equivalently  the Euclidean metric, since the two metrics are quasisymmetrically equivalent, and in \cite{Kig09} there are criteria given by Kigami for the volume doubling property of a self-similar measure $\mu$.

In this paper, we restrict to consider the standard Sierpinski carpet $K$ equipped with a symmetric self-similar measure $\mu$. To be precise, let $F_i(x)=(x+2 p_i)/3, i=1,\ldots,8$, be contractive maps on $\mathbb R^2$, with $p_1=(0,0)$, $p_2=(1/2,0)$, $p_3=(1,0)$, $p_4=(1,1/2)$, $p_5=(1,1)$, $p_6=(1/2,1)$, $p_7=(0,1)$ and $p_8=(0,1/2)$. Then the Sierpinski carpet $K$ is the unique nonempty compact subset in $\mathbb{R}^2$ satisfying
\begin{equation*}
K=\bigcup\limits_{i=1}^8 F_i(K),
\end{equation*}
see Figure~\ref{figure: SC-fig}. Let $\mu$ be a self-similar measure supported on $K$, with the weights $\{\mu_i\}_{i=1}^8$ satisfying $\mu_1=\mu_3=\mu_5=\mu_7$, $\mu_2=\mu_4=\mu_6=\mu_8$ and $\mu_1+\mu_2=1/4$. To obtain a two-sided sub-Gaussian heat kernel estimates on $K$, the key step we need to do is to show the existence of a ``good" metric matching the given self-similar measure $\mu$. We apply the method in \cite{Kig09} to obtain a \textit{pseudometric} $D_g$ on $K$ from a given self-similar weight function $g$ which is defined on cells of $K$. Then the fundamental problem is to determine whether $D_g$ is indeed a metric on $K$.

\begin{figure}[htbp]
\begin{center}
  \scalebox{0.5}{\includegraphics{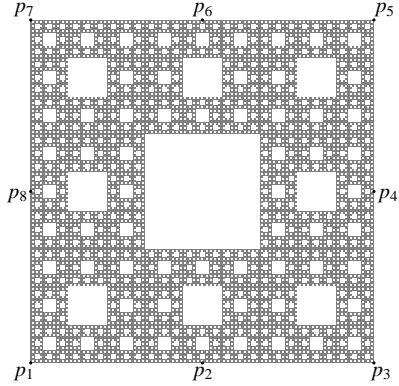}}
\end{center}
\caption{The Sierpinski carpet.}
\label{figure: SC-fig}
\end{figure}

More precisely, Let $\Sigma^0=\{\emptyset\}$. For $n\geq1$, let $\Sigma^n=\{1,\ldots,8\}^n$ be the collection of \textit{words} with length $n$. For $w=w_1\cdots w_n\in\Sigma^n$, we write $K_w=F_w(K):=F_{w_1}\circ\cdots\circ F_{w_n}(K)$, and call it an \textit{$n$-cell} of $K$. Denote by $\Sigma^*=\bigcup_{n\geq 0} \Sigma^n$ the collection of all finite words, and by $|w|$ the \textit{length} of $w$ for each $w\in \Sigma^*$.

Following \cite[Chapter 2]{Kig09}, a finite sequence of words $\big(w(1), \ldots,w(m)\big)$ in $\Sigma^*$, or equivalently, cells $\big(K_{w(1)}, \ldots,K_{w(m)}\big)$ in $K$ is called a \textit{chain} if $K_{w(i)}\cap K_{w(i+1)}\not=\emptyset$ for  $1\leq i\leq m-1$. A chain $\big(w(1), \ldots,w(m)\big)$ is said to be a chain  between $x$ and $y$ for $x,y\in K$ if $x\in K_{w(1)}$ and $y\in K_{w(m)}$.

We call $g: \Sigma^*\rightarrow [0,1]$ a \textit{weight function} if $g$ satisfies the following two conditions:

$1$.  $g(\emptyset)=1$, $g(w j)\leq g(w)$ if $w\in \Sigma^*$ and $j\in\{1,\ldots,8\}$;

$2$. $\lim\limits_{n\rightarrow\infty}\sup_{w \in {\Sigma^n}}g(w )=0$.

For a weight function $g$ and for any $x,y\in K$, we define
\begin{equation*}
  D_g(x,y)=\inf\Big\{ \sum_{i=1}^m g\big(w(i)\big) |\, (w(1),\ldots,w(m)\big) \mbox{ is a chain between $x$ and $y$}\Big\}.
\end{equation*}
It is easy to see that $D_g(\cdot,\cdot)$ is symmetric, nonnegative and satisfies the triangle inequality. However, in general, it may happen that $D_g(x,y)=0$ for some pairs $x\not=y$ in $K$ so that $D_g$  fail to be a metric. It is also clear that $D_g$ is continuous with respect to the Euclidean metric on $K$.

We focus on a class of self-similar weight functions. Given $(a,b)\in (0,1)^2$, we define $g_{a,b}: \Sigma^*\rightarrow (0,1]$ by $g_{a,b}(w)=\prod_{k=1}^n r_{w_k}$ for $w=w_1\cdots w_n$, where
\begin{equation}\label{eqeq1}
  r_{j}=\begin{cases}
     a, & \mbox{ if } j\in \{1,3,5,7\},\\
     b, & \mbox{ if } j\in \{2,4,6,8\},
  \end{cases}
\end{equation}
see Figure~\ref{figure:weight} for the cases $|w|=1$ and $|w|=3$.
It is easy to see that $g_{a,b}$ is a weight function.

\begin{figure}[htbp]
    \begin{minipage}[t]{0.45\linewidth}
      \begin{center}
        {\includegraphics[height=4cm]{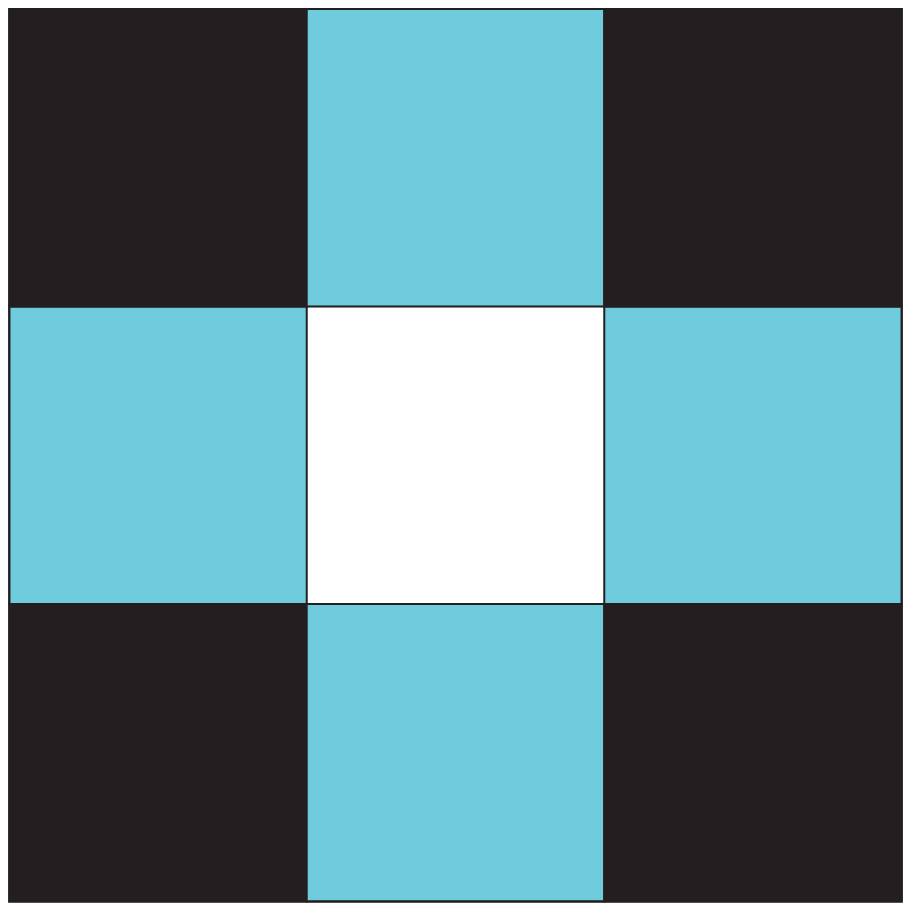}}
      \end{center}
  \end{minipage}
    \hfill
    \begin{minipage}[t]{0.45\linewidth}
       \begin{center}
           {\includegraphics[height=4cm]{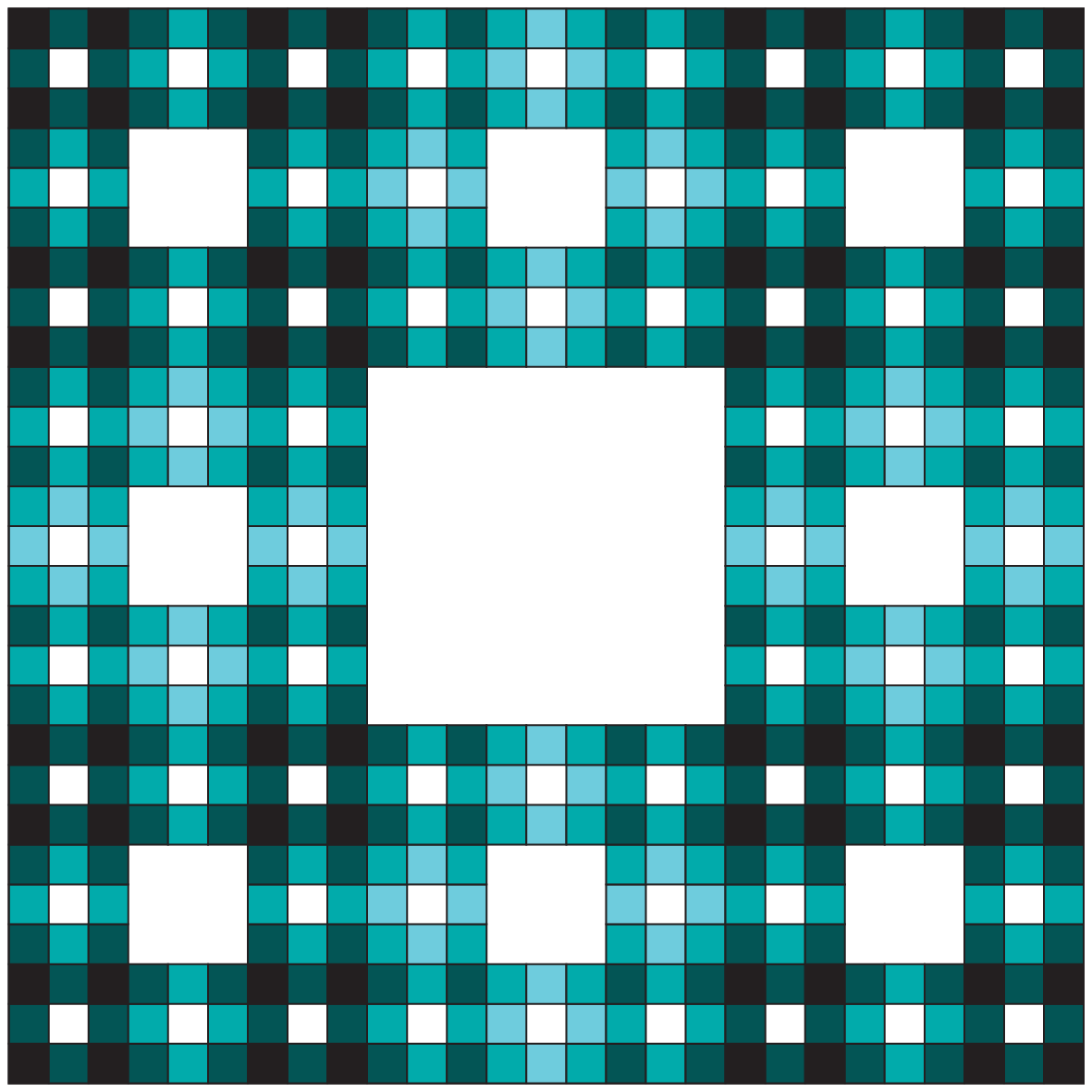}}
       \end{center}
    \end{minipage}
    \caption{The weight function $g_{a,b}$.}
    \label{figure:weight}
  \end{figure}

The main purpose of this paper is to characterize the equivalent conditions for $(a,b)\in (0,1)^2$ such that $D_{g_{a,b}}$ is a metric on $K$. Our result is the following (Theorem~\ref{th2.2}).

\begin{theorem}\label{th1.1}
  For $(a,b)\in (0,1)^2$, $D_{g_{a,b}}$ is a metric if and only if $2a+b\geq 1$ and $a+2b\geq 1$.
\end{theorem}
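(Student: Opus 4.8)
The plan is to reduce the metric property to the positivity of a finite family of self-similar \emph{crossing distances} and then to analyze the min-type recursions these satisfy. Since $D_{g_{a,b}}$ is already a continuous pseudometric, it is a metric if and only if $D_{g_{a,b}}(x,y)>0$ for every pair of distinct points. Because $g_{a,b}$ is invariant under the dihedral symmetry group of the square (the corner cells $\{1,3,5,7\}$ are permuted among themselves, and likewise the edge cells $\{2,4,6,8\}$), and because $K$ is self-similar, I expect this to reduce to a handful of quantities: the distance $P$ between two opposite edges of $K$, the distance $T$ between two adjacent corners (an edge traversal), the diagonal distance $d$ between opposite corners, and the edge-to-far-corner distance $U$. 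First I would make precise that $D_{g_{a,b}}$ is a metric if and only if each of these stays bounded below by a positive constant uniformly over the level of refinement.

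Next I would derive the recursions. Splitting an optimal chain according to the level-$1$ cells it visits, and using that two level-$1$ cells meet either along a shared edge (e.g.\ $K_1\cap K_2$) or at a single corner point (e.g.\ $K_2\cap K_4=\{(2/3,1/3)\}$), each crossing distance at level $n$ becomes the minimum over a few \emph{templates}, each contributing one factor $r_i\in\{a,b\}$ times a level-$(n-1)$ crossing distance of the induced type. The place where the two stated numbers appear is the opposite-edge crossing: it can be realized either by the straight row $K_1,K_2,K_3$, costing $(2a+b)$ times a shorter edge crossing, or by the bent path $K_1,K_2,K_4$ that uses the corner contact, costing $a$ times a shorter edge crossing plus $2b$ times the edge-to-corner distance $U$. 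This produces a coupled system in $(P,U,\dots)$ governed by a small nonnegative matrix whose entries are built from $a$ and $b$.

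For necessity I would argue directly. If $2a+b<1$, the bottom edge of $K$ is tiled at level $n$ by the $3^n$ cells $\{K_w:w\in\{1,2,3\}^n\}$, which form a chain from $(0,0)$ to $(1,0)$ of total weight $(a+b+a)^n=(2a+b)^n\to 0$; hence $D_{g_{a,b}}\big((0,0),(1,0)\big)=0$ and $D_{g_{a,b}}$ is not a metric. If instead $a+2b<1$, I would iterate the bent template, exploiting the corner contacts $K_2\cap K_4$ at every scale, to construct chains between a fixed pair of distinct points whose total weights tend to $0$; pinning down the pair and the decay rate requires tracking the edge-to-corner distance $U$ through the recursion, and this is the more delicate of the two necessity arguments.

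The sufficiency direction, and in particular the lower bounds, is where I expect the real work to be. Assuming $2a+b\ge 1$ and $a+2b\ge 1$, I would prove that every crossing distance stays bounded below by a positive constant independent of $n$; granting this, a point lying in a small cell $K_w$ cannot be joined to a point outside the neighborhood of $K_w$ by any chain of total weight less than $c\,g_{a,b}(w)>0$, which yields $D_{g_{a,b}}(x,y)>0$. The hard part is to supply lower bounds dual to the template upper bounds: one must show that every chain crossing $K$ is forced to ``pay'' at least a full straight row or a full bent path, so that the associated recursions cannot contract, equivalently the relevant growth factor is $\ge 1$ \emph{exactly} when both inequalities hold. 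The chief obstacle is the bent path, since the single-point corner contacts make the edge-to-corner and diagonal distances enter the recursion; controlling this coupling, and checking that refinement through corners never undercuts the threshold, is the crux on which the equivalence turns.
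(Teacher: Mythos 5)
Your necessity argument is only half complete. The case $2a+b<1$ is fine and matches the paper: the level-$n$ cells meeting the bottom edge form a chain of total weight $(2a+b)^n\to 0$. But for $a+2b<1$ you propose iterating a ``bent template'' through the corner contact $K_2\cap K_4=\{(2/3,1/3)\}$ and concede that pinning down the pair of points and the decay rate is unresolved. You are missing the clean geometric fact the paper leans on: the \emph{segment} $\overline{p_2p_4}$ from $(1/2,0)$ to $(1,1/2)$ lies entirely in $K$ (it sits inside the invariant diamond $\overline{p_2p_4}\cup\overline{p_4p_6}\cup\overline{p_6p_8}\cup\overline{p_8p_2}$), and every cell meeting it has exactly one $a$-son and two $b$-sons meeting it, so the level-$n$ cells intersecting this segment form a chain of total weight exactly $(a+2b)^n$. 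With that observation the second necessity case is as immediate as the first; without it your necessity proof has a hole.

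The sufficiency direction is where the proposal stops short of a proof. Your plan reduces everything to min-type recursions for a finite list of crossing distances and then states that ``one must show that every chain crossing $K$ is forced to pay at least a full straight row or a full bent path'' --- but that forced-payment lower bound is precisely the entire difficulty, and the recursion formalism does not deliver it on the carpet. Unlike finitely ramified fractals, adjacent cells here meet along whole line segments, so an optimal chain can enter and leave a level-$1$ cell anywhere along a shared edge and revisit cells; decomposing it by level-$1$ cells does not close up into a recursion in finitely many quantities. The paper's actual mechanism consists of three ingredients you would need substitutes for: (i) a dichotomy lemma showing via reflections and self-similarity that if $D_g$ fails to be a metric then $D_g\equiv 0$, which lets one argue by contradiction from a single degenerate pair; (ii) a family of reflection and ``smash'' operations that fold an arbitrary low-weight chain, scale by scale, onto a fixed line segment ($\overline{p_1p_3}$ for $b\ge a$, a slope $-1$ segment through $F_3(1/2,1)$ for $a\ge b$) without increasing the total weight by more than a bounded factor; and (iii) a projection-plus-``three to one'' counting argument showing that any chain all of whose cells meet that segment has total weight bounded below by a positive constant, using exactly $2a+b\ge 1$ or $a+2b\ge 1$. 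None of these appears in your proposal, so as written it identifies the crux but does not cross it.
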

This result was conjectured by Kigami in a conference at Cornell University in 2017. He showed the ``only if\," part and claimed that the ``if\," part holds if additionally $b\geq 1/3$. For completeness, we will give a whole proof of the theorem. The following is our main idea to prove the ``if\," part: given a chain, we construct a new chain which has nice properties, while its total weight is comparable to that of the previous chain. We remark that the main technique is applying a series of operations on chains, see Section \ref{sec2} for details.

\medskip

Once we obtain the metrics on $K$, we want to study whether they satisfy the needs for the heat kernel estimates. A basic property is the {\it adaptedness} introduced by Kigami\cite{Kig09}, see Section \ref{sec3.1}. Another property is the {\it chain condition}, see \cite{GHL}, which is used in obtaining the off-diagonal lower bound of heat kernel.

\medskip

Denote $\sigma=\{(a,b)\in (0,1)^2:\, 2a+b\geq1 \textrm{ and } a+2b\geq1\}$. Let $\sigma_1$ be the set of elements $(a,b)$ in $\sigma$ such that $b\geq a$, and $\sigma_2$ be that with $a\geq b$, then $\sigma=\sigma_1\cup\sigma_2$. Write $I_1=\{(a,b)\in\sigma_1: 2a+b=1\}$ and $I_2=\{(a,b)\in \sigma_2: a+2b=1\}$. We give two properties of the constructed metrics (Theorems~\ref{thm-adapt}, Proposition \ref{thm-qs}):
\begin{theorem}\label{th1.2}
For $(a,b)\in \sigma$, $D_{g_{a,b}}$ is adapted to $g_{a,b}$.
\end{theorem}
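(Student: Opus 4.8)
The plan is to unwind the definition of adaptedness recalled in Section~\ref{sec3.1}: for the metric $D:=D_{g_{a,b}}$ and the weight $g:=g_{a,b}$ it asks for constants $c_1,c_2>0$ such that $B_D(x,c_1s)\subseteq U_s(x)\subseteq B_D(x,c_2s)$ for all $x\in K$ and $s\in(0,1]$. Here, writing $w^-$ for $w$ with its last letter deleted, the scale $\Lambda_s=\{w\in\Sigma^*: g(w)\le s<g(w^-)\}$ partitions the symbol space into cells of weight comparable to $s$ (indeed $s\min(a,b)<g(w)\le s$ for $w\in\Lambda_s$), and $U_s(x)=\bigcup\{K_v:v\in\Lambda_s,\ K_v\cap K_w\neq\emptyset\text{ for some }w\in\Lambda_s\text{ with }x\in K_w\}$ is the union of the scale-$s$ cells meeting a scale-$s$ cell containing $x$.

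The inclusion $U_s(x)\subseteq B_D(x,c_2s)$ is immediate: if $y\in U_s(x)$ there are $w,v\in\Lambda_s$ with $x\in K_w$, $y\in K_v$ and $K_w\cap K_v\neq\emptyset$, so $(w,v)$ is a chain between $x$ and $y$ and $D(x,y)\le g(w)+g(v)\le 2s$; thus $c_2=3$ works. The content is the reverse inclusion, equivalently the lower bound $D(x,z)\ge c_1 s$ whenever $z\notin U_s(x)$, which I would prove by a first-exit argument at scale $s$. Fix a chain $\Gamma=(w(1),\dots,w(m))$ from $x$ to $z$; if some $g(w(i))\ge c_1 s$ the bound is trivial, so assume every cell of $\Gamma$ is finer than scale $c_1 s$. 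Each $w(i)$ then has a unique ancestor $v(i)\in\Lambda_s$ with $K_{w(i)}\subseteq K_{v(i)}$, and $K_{w(i)}\cap K_{w(i+1)}\neq\emptyset$ forces $K_{v(i)}\cap K_{v(i+1)}\neq\emptyset$, so (after deleting repetitions) the sequence $(v(i))$ projects $\Gamma$ to a $\Lambda_s$-chain from an $x$-cell $K_w:=K_{v(1)}$ to a cell containing $z$. Since $z\notin U_s(x)$, this projected chain must leave the union $N$ of $K_w$ and its $\Lambda_s$-neighbors.

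Tracking the first index at which the projection exits $N$, one finds a ring cell $P$ (a neighbor of $K_w$ with $P\neq K_w$) and a cell $Q\in\Lambda_s$ with $K_Q\cap K_w=\emptyset$ such that $\Gamma$ passes from a subcell of $K_w$ to a subcell of $K_Q$. Following $\Gamma$ between the last contact point $p\in\partial K_w$ before this excursion and the first contact point $q$ lying in a cell disjoint from $K_w$ exhibits a subchain of $\Gamma$ joining $p$ to $q$; hence the weight of $\Gamma$ is at least $D(p,q)\ge D(K_w,S_w)$, where $S_w=\bigcup\{K_v:v\in\Lambda_s,\ K_v\cap K_w=\emptyset\}$. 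Everything therefore reduces to the uniform separation estimate $D(K_w,S_w)\ge c\,g(w)$, which together with $g(w)\asymp s$ yields a suitable $c_1$.

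This separation estimate is the main obstacle. Its companion, the upper bound $\diam_D K_w\le g(w)\,\diam_D K$, is trivial (prefix a chain in $K$ by $w$), but the matching lower bound must contend with chains that leave $K_w$: a near-optimal chain from $K_w$ to $S_w$ may make excursions into other parent cells, and such excursions are not pulled back by a single similarity. The plan is to establish the self-similar lower bound $D(F_u(\xi),F_u(\eta))\ge c_0\,g(u)\,D(\xi,\eta)$ by showing, via the chain operations of Section~\ref{sec2}, that any excursion outside $K_u$ can be replaced at bounded multiplicative cost by a path confined to $K_u$; this is precisely where $(a,b)\in\sigma$, i.e. $2a+b\ge1$ and $a+2b\ge1$, enters quantitatively, being the same mechanism that forbids cheap detours in the proof of the ``if'' part of Theorem~\ref{th1.1}. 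Granting this bound and applying it at the level of the parent $K_{w^-}$ (and, for the boundedly many cells of $S_w$ lying outside the parent, at a bounded number of further ancestors), the separation estimate follows from the positivity $\delta_0=\min\{D(K_i,K_j):K_i\cap K_j=\emptyset\}>0$, which holds since $D$ is a metric by Theorem~\ref{th1.1}. I expect proving the excursion bound uniformly in $u$ to be the most delicate step.
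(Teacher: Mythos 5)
Your reduction is fine as far as it goes: the inclusion $U_s(x)\subseteq B_D(x,c_2s)$, the projection of an arbitrary chain onto the scale $\Lambda_s$, and the first-exit argument showing that any chain from $x$ to $z\notin U_s(x)$ contains a subchain joining $K_w$ to $S_w$ are all correct (and working with the $U_s$-formulation rather than the paper's $D_{g;M}\le C_*D_g$ formulation is legitimate, granted the equivalence the paper cites from \cite{Kig19}). But the proof stops exactly where the content begins. The separation estimate $D(K_w,S_w)\ge c\,g(w)$, which you propose to deduce from a uniform self-similar lower bound $D(F_u(\xi),F_u(\eta))\ge c_0\,g(u)\,D(\xi,\eta)$, is only announced as a plan (``granting this bound'', ``I expect proving the excursion bound uniformly in $u$ to be the most delicate step''). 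The chain operations of Section~\ref{sec2} do not supply that bound off the shelf: they are reflections through the specific families of lines $\ell_n,\ell_n'$ and $L_n,L_n'$, engineered to push a chain onto one fixed segment, not to confine an arbitrary chain to an arbitrary cell $K_u$ at bounded cost. So the key quantitative step is missing, and what remains is essentially a restatement of the theorem.

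The paper closes this gap without any general excursion lemma by taking the first exit at a Euclidean scale rather than at the weight scale. With $3^{-m}<\|p-q\|_\infty\le 3^{-m+1}$ and $u$ the $(m+1)$-cell containing $p$, one looks at the first cell of the chain meeting the boundary of the $3\times3$ block $S$ of squares of side $3^{-m-1}$ around $F_u([0,1]^2)$. Either that cell is coarser than level $m+1$, in which case its weight alone is at least $c^{-2}g(u)$ with $c=\max\{a/b,b/a\}$; or the entire initial subchain lies in $S$, and then at most two reflections per cell through the four edge-lines of $F_u([0,1]^2)$ fold it into $K_u$ at multiplicative cost at most $c^2$ by Proposition~\ref{th2.1}. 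The folded chain lies wholly inside $K_u$ and joins two points of $K_u$ whose $x$- or $y$-coordinates differ by the full side length $3^{-m-1}$, so exact self-similarity pulls it back by $F_u^{-1}$ to a chain in $K$ joining opposite faces of the unit square, whose weight is bounded below by the constant $C_0$ of Lemma~\ref{lem-adapt}; that constant is free by compactness and continuity once Theorem~\ref{th2.2} guarantees $D_g$ is a metric. If you replace your abstract excursion bound by this concrete, bounded folding (and treat the coarse-cell case separately), your argument closes; as written it does not.
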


\begin{proposition}\label{th1.3}
For $(a,b)\in \sigma$, $D_g$ is quasisymmetric to the Euclidean metric on $K$.
\end{proposition}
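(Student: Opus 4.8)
The plan is to deduce quasisymmetry from the adaptedness already established in Theorem~\ref{th1.2}, by comparing $D_{g_{a,b}}$ with the metric coming from the uniform weight $g_{1/3,1/3}$. First I would record two facts. Since $(1/3,1/3)\in\sigma$, Theorem~\ref{th1.1} shows that $D_{g_{1/3,1/3}}$ is a metric and Theorem~\ref{th1.2} shows it is adapted to $g_{1/3,1/3}$; moreover $D_{g_{1/3,1/3}}$ is bi-Lipschitz to the Euclidean metric on $K$, because for a uniform weight the chain metric at scale $3^{-n}$ is comparable to $|x-y|$ (the carpet being quasiconvex). As quasisymmetry is preserved under bi-Lipschitz changes and under composition, it suffices to prove that $D_{g_{a,b}}$ and $D_{g_{1/3,1/3}}$ are quasisymmetric.

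Both metrics are adapted to their weight functions, so I would invoke Kigami's criterion from \cite{Kig09}: two metrics adapted to their respective weight functions are quasisymmetric if and only if the two weight functions are mutually gentle. Here $g_1$ is \emph{gentle} with respect to $g_2$ if there is $C>0$ such that $g_1(w)\le C\,g_1(w')$ whenever $K_w$ and $K_{w'}$ are neighbouring cells ($K_w\cap K_{w'}\neq\emptyset$) in the partition determined by $g_2$ at some scale. The problem is thereby reduced to verifying that $g_{a,b}$ and $g_{1/3,1/3}$ are mutually gentle.

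The heart of the argument is a lemma on the address structure of adjacent cells. Writing $o(w)$ for the number of letters of $w$ lying in $\{1,3,5,7\}$ (the ``corner'' maps), we have $g_{a,b}(w)=a^{o(w)}b^{|w|-o(w)}$, so gentleness reduces to bounding $|o(w)-o(w')|$ and $\big||w|-|w'|\big|$ for neighbouring cells. Given adjacent $K_w,K_{w'}$, let $v$ be their longest common prefix, so $w=viu$, $w'=vju'$ with $i\neq j$ and $K_i\cap K_j\neq\emptyset$. Inspecting the eight maps shows that every such pair $(i,j)$ is either an \emph{edge contact}, where $K_{vi}\cap K_{vj}$ is a segment and exactly one of $i,j$ is a corner letter, or a \emph{corner contact}, where the intersection is a single point and both $i,j$ are edge letters. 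In either case both sides induce the identical ternary subdivision of the shared boundary, and the continuations $u,u'$ run through position-matched subcells whose letters have equal $r$-value at every level. Consequently the two branch weights stay within the fixed ratio $r_i/r_j\in\{1,a/b,b/a\}$ at all depths, which at once bounds $|o(w)-o(w')|$ and forces the $\Lambda_s$-stopping depths of the two branches to differ by at most a constant, giving $\big||w|-|w'|\big|\le C_0$.

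The step I expect to be the main obstacle is precisely this last point for the non-uniform scale of $g_{a,b}$: equal $g_{a,b}$-weight does \emph{not} by itself bound the lengths of two cells, so one cannot argue from weights alone. The resolution is the matching observation above: along a shared boundary the two neighbouring cells accumulate identical per-level weight factors, so the two branches reach the threshold $s$ at comparable depths even though their lengths may differ from those of cells elsewhere. Carefully proving the ``identical subdivision and matched $r$-value'' claim, uniformly over the divergence pair $(i,j)$ and over all scales, is where the technical work lies; once it is in place, both gentleness conditions follow with a constant depending only on $a/b$, and the proposition follows from Kigami's criterion.
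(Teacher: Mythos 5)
Your route is genuinely different from the paper's. The paper deduces quasisymmetry directly from $1$-adaptedness: by Corollary~\ref{cor-adapt} and the remark following it, $D_g(p,q)\asymp g(w)$ whenever $p\in K_w$ and $3^{-|w|-1}<\|p-q\|_\infty\le 3^{-|w|}$, and the quasisymmetry inequality is then verified by hand with the distortion function $h(t)=C\max\{t^{\kappa_1},t^{\kappa_2}\}$, $\kappa_1=\frac{\log(a\vee b)}{-\log 3}$, $\kappa_2=\frac{\log(a\wedge b)}{-\log 3}$, using only the trivial bounds $(a\wedge b)^n\le g(wu)/g(w)\le (a\vee b)^n$ for $|u|=n$. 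No gentleness and no comparison partition are needed. Your reduction to mutual gentleness of $g_{a,b}$ and $g_{1/3,1/3}$ is a legitimate alternative within Kigami's framework, but the criterion you invoke is not stated in \cite{Kig09} in the form you use; the quasisymmetry--gentleness equivalence appears in \cite{Kig12,Kig19} and carries hypotheses (exponentiality, uniform finiteness, tightness of the weight functions) that you would still have to verify, so it is a much heavier black box than the short computation the paper actually performs.

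More importantly, there is a gap in your key lemma. The easy half (adjacent cells of equal length have $g_{a,b}$-weights within $c^2$, $c=\max\{a/b,b/a\}$) is fine. But for the other half --- that two adjacent cells $K_w,K_{w'}$ in the scale-$s$ partition of $g_{a,b}$ satisfy $\bigl||w|-|w'|\bigr|=O(1)$ --- your mechanism fails as stated: adjacent cells are in general not ``position-matched,'' and the continuations $u,u'$ past the divergence need not have the same length, so there is no level-by-level matching between the branch of $w$ and the branch of $w'$ (e.g.\ $K_{1[3]^5}$ and $K_{2[1]^3}$ are adjacent at the point $(1/3,0)$). What is true, and what rescues the lemma, is Proposition~\ref{th2.1}: fix $z\in K_w\cap K_{w'}$ on the common edge $L$ of $K_{vi}$ and $K_{vj}$; the cells containing $z$ on the $vj$-side form a single nested chain $(B_k)_k$ with $g(B_{k+1})\ge (a\wedge b)\,g(B_k)$, and the reflection $R_L$ maps the corresponding chain on the $vi$-side onto it with the \emph{fixed} weight ratio $r_j/r_i$. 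Since $K_{w'}$ and $R_L(K_w)$ both lie on this single chain and have weights within a bounded factor of $s$, geometric decay along the chain bounds $\bigl||w|-|w'|\bigr|$; corner contacts are handled by passing around the corner through at most three edge contacts. Without this nested-chain argument, the ``identical per-level weight factors'' claim does not by itself yield the gentleness you need, so as written the proof is incomplete.
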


By using the adaptness of the metric, we then show that
\begin{theorem}\label{th1.4}
For $(a,b)\in\sigma$, $D_{g_{a,b}}$ satisfies the chain condition if and only if $(a,b)\in I_1\cup I_2$.
\end{theorem}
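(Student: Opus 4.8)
The plan is to recast the chain condition in terms of the intrinsic length metric associated with $D:=D_{g_{a,b}}$, and to single out the quantity $\lambda:=\min\{2a+b,\,a+2b\}$ as the decisive parameter. For a continuous path $\gamma$ in $K$, let $\ell_D(\gamma)$ denote its $D$-length, the supremum of $\sum_i D\big(\gamma(t_i),\gamma(t_{i+1})\big)$ over partitions, and put $\hat D(x,y)=\inf\{\ell_D(\gamma):\gamma \text{ joins } x \text{ to } y\}$. The first step is a general reduction: the chain condition forces, for every pair $x,y$, a family of $\delta$-fine chains of total length $\le C\,D(x,y)$ with $\delta\to 0$, and since $K$ is compact an Arzel\`a--Ascoli argument extracts from these a rectifiable curve from $x$ to $y$ with $\ell_D\le C\,D(x,y)$, so that $\hat D\le C\,D$; conversely, if $\hat D\asymp D$ then $(K,\hat D)$ is a compact length space, hence geodesic, and subdividing a $\hat D$-geodesic into $n$ arcs of equal length produces exactly the chains required. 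Thus the chain condition is equivalent to the comparability $\hat D\asymp D$, and it suffices to decide when this holds.

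The heart of the matter is a scaling dichotomy for the lengths of crossing paths, read off from self-similarity. A path crossing a cell $K_w$ from one side to another refines into a crossing path one level finer whose $D$-length is the previous one times a crossing factor; the two primitive factors are $2a+b$ (a straight edge crossing, through corner--edge--corner cells) and $a+2b$ (a corner turn, through edge--corner--edge cells), so the cheapest crossing factor is $\lambda$. A genuine continuous path realizing a macroscopic crossing is the limit of its refinements, so its $D$-length behaves like $\lim_k\lambda^k(\cdots)$ up to constants. Consequently, if $\lambda>1$ then \emph{every} non-constant continuous curve in $K$ has infinite $D$-length, whence $\hat D\equiv\infty$ on distinct points and the chain condition fails; if $\lambda=1$ then crossing paths in the cheap direction are rectifiable. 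A direct computation gives $\lambda=1$ exactly on $I_1\cup I_2$: in $\sigma_1$ one has $b\ge a$, so $\lambda=2a+b$, equal to $1$ on $I_1$ and $>1$ on $\sigma_1\setminus I_1$, and symmetrically $\lambda=a+2b$ on $\sigma_2$. This settles the ``only if'' direction: off $I_1\cup I_2$ we have $\lambda>1$, hence no non-constant rectifiable curves and no chain condition.

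For the ``if'' direction, assume $(a,b)\in I_1$ (the case $I_2$ being symmetric under exchanging the two coordinate directions), so $2a+b=1$. I would construct an explicit rectifiable skeleton $\Gamma=\bigcup_w \partial K_w$: each cell edge is a self-similar curve with crossing factor $2a+b=1$, hence rectifiable with $\ell_D(\partial K_w)\asymp g_{a,b}(w)$, and these curves meet across adjacent cells and are dense in $K$. The remaining task is to prove that the length metric induced by $\Gamma$ is comparable to $D$ uniformly over all pairs: every point (including the carpet's corners, reached by a geometric series in $a,b<1$) is joined to $\Gamma$ cheaply, and any two points are joined along $\Gamma$ by routing through cheap edge crossings while avoiding the expensive corner turns, with total length $\lesssim D(x,y)$. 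Here the adaptedness of $D$ to $g_{a,b}$ (Theorem~\ref{th1.2}) supplies the two-sided comparison $\diam_D K_w\asymp g_{a,b}(w)$ that converts a routing through cells of controlled weight into a length bound, and the chain operations of Section~\ref{sec2} let one replace an arbitrary near-optimal chain for $D(x,y)$ by one that runs along $\Gamma$ with comparable total weight.

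The main obstacle I anticipate is exactly this last uniform comparability $\hat D\asymp D$ on $I_1\cup I_2$: one must show that the cheap-direction skeleton is rich enough to connect \emph{every} pair by rectifiable curves of length $\lesssim D(x,y)$ with a single constant, while systematically staying away from the non-rectifiable crossing direction, at all scales simultaneously. The scaling dichotomy itself is clean; controlling this routing uniformly is where the real work lies.
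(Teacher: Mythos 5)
Your reduction of the chain condition to quasiconvexity ($\hat D\asymp D$) and the identification of $\lambda=\min\{2a+b,a+2b\}$ as the decisive parameter is a reasonable framing, but both halves of your argument have genuine gaps. For the ``only if'' direction, the claim that $\lambda>1$ forces every non-constant curve to have infinite $D$-length is asserted through a refinement heuristic that does not survive scrutiny as written: the $D$-length of a curve is $\sup\sum_i D(\gamma(t_i),\gamma(t_{i+1}))$, and when you partition a curve by its passages through level-$k$ cells, consecutive entry/exit points can be arbitrarily close (the curve may merely clip a corner), so the length is \emph{not} bounded below by the sum of the weights of the cells traversed. Moreover, the minimal total weight of a level-$k$ chain joining two fixed boundary points does not grow like $\lambda^k$ --- by Theorem~\ref{th2.2} and adaptedness it stays bounded between two positive constants; what grows like $(2a+b)^k$ or $(a+2b)^k$ is only the weight of the specific self-similar skeleton chains. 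Making your dichotomy rigorous would require a nontrivial crossing/telescoping argument. The paper's Lemma~\ref{lemma3.6} avoids all of this with a short rigorous trick: set $\widetilde a=a^{\lambda}$, $\widetilde b=b^{\lambda}$ with $\lambda>1$ chosen so that $(\widetilde a,\widetilde b)$ lies on the critical line; then $D_{\widetilde g}(p,q)\le D_g(p,q)^{\lambda}$ by subadditivity of $t\mapsto t^{\lambda}$ on sums, and the chain condition for $D_g$ would give $D_{\widetilde g}(p,q)\le n^{1-\lambda}\bigl(C\,D_g(p,q)\bigr)^{\lambda}\to 0$, contradicting that $D_{\widetilde g}$ is a metric by Theorem~\ref{th2.2}.

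For the ``if'' direction there are two problems. First, your claim that the case $I_2$ follows from $I_1$ ``by symmetry under exchanging the two coordinate directions'' is false: swapping coordinates preserves the weight function (corner cells stay corner cells), so it maps each of $I_1$, $I_2$ to itself rather than interchanging them. The relevant relation is $(a,b)\mapsto(b,a)$, which is not induced by any isometry of the carpet. Concretely, on $I_2\setminus\{(1/3,1/3)\}$ one has $2a+b>1$, so the cell boundaries $\bigcup_w\partial K_w$ you propose as a rectifiable skeleton are \emph{not} rectifiable there; the rectifiable skeleton in the $I_2$ case consists of the diagonal polylines through edge midpoints (crossing factor $a+2b=1$), and the paper's Case~2 of Lemma~\ref{lemma3.5} builds the chains from exactly those segments. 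Second, the uniform comparability that you explicitly defer --- routing an arbitrary pair $p,q$ along the skeleton with total length $\lesssim D_g(p,q)$ --- is the entire content of the positive direction, not a technicality. The paper does it by an explicit induction: start from a coarse chain of $2$--$4$ cells whose total weight is $\lesssim D_g(p,q)$ (this uses $1$-adaptedness, Theorem~\ref{thm-adapt}), then repeatedly subdivide the currently heaviest cell by inserting two points at the one-third positions along the skeleton segment, using $2a+b=1$ (resp.\ $a+2b=1$) to keep the total weight from growing and a weight-equalization argument to show that after $k$ steps each consecutive distance is $\lesssim g(\gamma_k)/n_k\lesssim D_g(p,q)/n_k$. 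Without some such construction your proposal does not yet prove either implication.
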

We remark that once we have shown that $D_g$ is a metric for some $g$ which is a self-similar weight function, the adapatedness of metric is an immediate consequence of \cite[Theorem 2.3.16]{Kig09} in a more general framework. We include a proof in this paper for completeness and also for the convenience of the readers.
After showing that the metric $d$ satisfies these conditions, and noticing that the fully symmetric self-similar measure $\mu$ satisfies the volume doubling property (see \cite{Kig09}), then by applying known results (e.g. \cite{Kig09}), we obtain the two-sided sub-Gaussian heat kernel estimates of the time change of $(\mathcal E,\mathcal F)$ associated with a given $\mu$.

\bigskip

We organize the paper as follows. In Section \ref{sec2}, we prove the main result Theorem \ref{th1.1}, which gives a positive answer to Kigami's conjecture. In Section \ref{sec3}, we study basic properties of the constructed metric, one is the adaptedness of the constructed metric to its weight function; the other is Theorem \ref{th1.3} for the criteria of the metric to satisfy the chain condition. By using these properties and applying known results established by Kigami, we obtain two-sided sub-Gaussian estimates for the heat kernel of the associated time change diffusions.


\section{Metrics on the Sierpinski carpet}\label{sec2}
 For a given chain $\gamma=\big(w(1),\cdots,w(m)\big)$, we define the \textit{total weight} of $\gamma$ by
\begin{equation*}
g_{a,b}(\gamma)=\sum_{i=1}^m g_{a,b}\big(w(i)\big).
 \end{equation*}
 The following property can be verified easily.
\begin{proposition}\label{th2.1}
For any two distinct words $w$ and $v$ with the same length such that $K_w$ and $K_v$ share a same line segment $L$, it holds that
\begin{equation*}
\frac{g_{a,b}(w)}{g_{a,b}(v)}=\frac ab \ \text{or}\ \frac ba.
\end{equation*}
Furthermore, if we reflect a chain $\gamma=(K_{w(1)},\ldots,K_{w(m)})$ contained in $K_v$ along $L$, and denote it by $R_L(\gamma)$, then $R_L(\gamma)$ is a chain contained in $K_w$ and $\frac{g_{a,b}(R_L(\gamma))}{g_{a,b}(\gamma)}=\frac ab \ \text{or}\ \frac ba.$

\end{proposition}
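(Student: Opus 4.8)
The plan is to prove both assertions together by induction on the common length $n=|w|=|v|$, using a reflection lemma as the bridge. First I would record the level-one picture: among the eight cells $K_1,\dots,K_8$, two distinct ones meet along a full line segment exactly when their indices are cyclically consecutive, and every such pair consists of one corner cell (odd index, weight $a$) and one edge cell (even index, weight $b$), while all other pairs meet in at most a point. This settles the base case $n=1$, where $g_{a,b}(w)/g_{a,b}(v)=r_{w_1}/r_{v_1}\in\{a/b,\,b/a\}$.

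For the inductive step I would split on whether $w_1=v_1$. If $w_1=v_1=:i$, then $K_w,K_v\subseteq K_i$ and $F_i^{-1}(L)$ is a segment shared by the $(n-1)$-cells $K_{w'},K_{v'}$ (writing $w=iw'$, $v=iv'$); cancelling the common factor $r_i$ reduces the ratio to the case of length $n-1$. If $w_1\neq v_1$, then $L\subseteq K_{w_1}\cap K_{v_1}$ has positive length, which forces $K_{w_1},K_{v_1}$ to be a cyclically adjacent corner--edge pair sharing the whole edge $E\supseteq L$, and the line of $L$ is the line of $E$.

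The heart of the proof, and the step I expect to be the main obstacle, is the reflection lemma: the Euclidean reflection $R_L$ across the line of $L$ is realized by a conjugated carpet symmetry, $R_L=F_w\circ\theta\circ F_v^{-1}$ for some $\theta$ in the dihedral symmetry group $D_4$ of $K$, and the induced index permutation (defined by $\theta\circ F_j=F_{\theta(j)}\circ\theta$) preserves the corner set $\{1,3,5,7\}$ and the edge set $\{2,4,6,8\}$. Any such $\theta$ then fixes the weight function on tails: $g_{a,b}(\theta(u))=g_{a,b}(u)$, where $\theta$ acts letterwise. I would prove this lemma by the same induction. When $w_1=v_1=i$ one conjugates the lower-level reflection by the similarity $F_i$, keeping $\theta$ unchanged. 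When $w_1\neq v_1$ one verifies the single reflection across the shared edge of a level-one adjacent pair---all eight such pairs are equivalent under the rotational symmetry of $K$, so one explicit computation suffices---to get $R_L=F_{w_1}\circ\psi\circ F_{v_1}^{-1}$ with $\psi$ a reflection of the square across a mid-line; folding $\psi$ through the tail via $\psi\circ F_{v'}=F_{\psi(v')}\circ\psi$ yields $R_L=F_w\circ\psi\circ F_v^{-1}$ with $w=w_1\psi(v')$. The identity $w=w_1\psi(v')$ itself rests on a subcell-uniqueness remark: since $L$ has positive length, the $n$-cell of $K_{w_1}$ touching $E$ along $L$ is unique, so the mirror image of $K_v$ is forced to equal $K_w$.

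With the lemma in hand both conclusions follow quickly. For the ratio, $g_{a,b}(w)=r_{w_1}\,g_{a,b}(\psi(v'))=r_{w_1}\,g_{a,b}(v')$ and $g_{a,b}(v)=r_{v_1}\,g_{a,b}(v')$, so the quotient equals $r_{w_1}/r_{v_1}\in\{a/b,\,b/a\}$. For the chain statement, $R_L$ preserves nonempty intersections, hence sends the chain $\gamma$ in $K_v$ to a chain $R_L(\gamma)$ in $K_w$; writing each cell of $\gamma$ as $K_{v\,u(i)}$, its image is $K_{w\,\theta(u(i))}$ with $g_{a,b}\big(w\,\theta(u(i))\big)=g_{a,b}(w)\,g_{a,b}(u(i))$, so summing over $i$ gives $g_{a,b}(R_L(\gamma))/g_{a,b}(\gamma)=g_{a,b}(w)/g_{a,b}(v)\in\{a/b,\,b/a\}$ by the first part. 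The only genuinely delicate bookkeeping is the corner/edge preservation of the induced permutations together with the touching-subcell uniqueness; the remaining manipulations are routine.
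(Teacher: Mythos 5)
Your proof is correct. Note that the paper itself offers no argument here---it simply asserts that the proposition ``can be verified easily''---so there is nothing to compare against; your induction on $|w|$, reduced via the conjugated reflection $R_L=F_{w_1}\circ\psi\circ F_{v_1}^{-1}$ to the level-one adjacency picture, is precisely the verification the authors leave implicit. You correctly isolate the two points that make it work: the induced $D_4$-permutations preserve the index classes $\{1,3,5,7\}$ and $\{2,4,6,8\}$ (hence fix $g_{a,b}$ on tails), and the positive length of $L$ forces the reflected cell $R_L(K_v)$ to coincide with $K_w$, which is what makes both the ratio computation and the chain statement go through.
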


The main purpose of this section is to prove the following.
\begin{theorem}\label{th2.2}
  For $(a,b)\in (0,1)^2$, $D_{g_{a,b}}$ is a metric if and only if $2a+b\geq 1$ and $a+2b\geq 1$.
\end{theorem}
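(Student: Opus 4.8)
The plan is to prove the two implications separately, treating positivity of $D_{g_{a,b}}$ on distinct points as the only nontrivial metric axiom, since symmetry and the triangle inequality are already recorded above. For the \emph{only if} part I would argue by exhibiting degenerating families of chains. If $2a+b<1$, consider the two bottom corners $(0,0)$ and $(1,0)$: the bottom row $(K_1,K_2,K_3)$ is a chain between them of weight $a+b+a=2a+b$, and refining each cell by its own bottom row gives, at level $n$, a chain between the \emph{same} two points of weight $(2a+b)^n$. Hence $D_{g_{a,b}}((0,0),(1,0))\le(2a+b)^n\to 0$, so $D_{g_{a,b}}$ is not a metric. The condition $a+2b\ge 1$ is forced by an analogous construction, using a pair of points whose cheapest connecting chains are driven not by a straight crossing but by routing through one corner cell flanked by two edge cells, so that the relevant self-similar family has total weight comparable to $(a+2b)^n$; taking $a+2b<1$ again collapses the distance to $0$. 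This is the direction already known to Kigami.

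The substance is the \emph{if} part: assuming $2a+b\ge 1$ and $a+2b\ge 1$, I must produce a uniform positive lower bound for $D_{g_{a,b}}$ on distinct points. The first step is a reduction, by self-similarity, to a single \emph{crossing estimate}: it suffices to find $c>0$ such that any chain joining the left edge of $K$ to its right edge has total weight at least $c$ (together with the rotational and corner-turning variants needed to separate points around the central hole). Indeed, if $x\ne y$ lie in non-adjacent $n$-cells, any chain between them must cross the annulus of $n$-cells surrounding one of them; since a chain inside a cell $K_w$ carries weights scaled by the factor $g_{a,b}(w)$, the crossing estimate applied scale by scale yields $D_{g_{a,b}}(x,y)\ge c\min_{|w|=n}g_{a,b}(w)>0$, which is all that is required.

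To establish the crossing estimate I would follow the strategy announced in the introduction: given an arbitrary crossing chain $\gamma$, apply a finite sequence of operations transforming it into a \emph{canonical} chain $\gamma^\ast$ --- one that runs monotonically across $K$ inside a bounded tube of cells, without backtracking or superfluous excursions --- while keeping the total weight comparable, $g_{a,b}(\gamma^\ast)\le C\,g_{a,b}(\gamma)$ for a universal constant $C$. The basic operations are reflection of a sub-chain contained in a cell across a shared edge, which by Proposition~\ref{th2.1} rescales its weight by the bounded factor $a/b$ or $b/a$; excision of loops and returning excursions, which only decreases weight; and replacement of a wandering sub-chain inside a sub-carpet by the canonical crossing of that sub-carpet. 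The two hypotheses enter precisely here: $2a+b\ge 1$ guarantees that a straight crossing is, up to constants, the cheapest way to traverse a cell, while $a+2b\ge 1$ plays the same role for the unavoidable corner-turns forced by the central hole. Once $\gamma$ has been straightened to $\gamma^\ast$, the weight of a monotone canonical crossing is bounded below by a positive constant directly from these inequalities, giving $g_{a,b}(\gamma)\ge C^{-1}g_{a,b}(\gamma^\ast)\ge c>0$.

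I expect the chain surgery to be the main obstacle. The difficulty is to organize all the ways a chain can wander --- backtracking, detouring, and winding around the hole at every scale --- into finitely many controllable moves, and to prove that the accumulated weight distortion is bounded by a constant independent of both the chain and the scale $n$. The equality cases $2a+b=1$ and $a+2b=1$ are the most delicate, since there is no strict contraction available to absorb errors; there one must argue purely combinatorially, exhibiting in every crossing chain enough forced corner and edge cells to account for a full unit of weight.
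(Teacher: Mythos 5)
Your ``only if'' direction is essentially the paper's: the bottom row gives the family of weight $(2a+b)^n$, and the family of weight $(a+2b)^n$ comes from the diagonal segment $\overline{p_2p_4}$ of slope $1$, which lies entirely in $K$ --- a fact you should state explicitly, since without a slope-$\pm1$ segment contained in $K$ there is no fixed pair of points joined by that self-similar family. The genuine gap is in the ``if'' direction, exactly where you flag ``the main obstacle,'' and it is not merely technical. You assert that the straightening surgery distorts total weight by a universal constant $C$ because each reflection rescales a sub-chain by the bounded factor $a/b$ or $b/a$ (Proposition~\ref{th2.1}). But the number of reflections needed to straighten a chain whose cells have word length up to $n$ grows without bound in $n$, so if each reflection could cost a factor $\max\{a/b,b/a\}>1$ the accumulated distortion is unbounded and no such $C$ exists. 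The paper's whole mechanism is built to avoid this: the target curve and the reflecting lines are chosen so that \emph{every} reflection performed is weight-non-increasing (Lemmas~\ref{lem-reflection} and~\ref{lemma23}), and this forces the case split $a\le b$ versus $a\ge b$ with different target curves --- the bottom edge $\overline{p_1p_3}$ when $a\le b$, but a line of slope $-1$ when $a\ge b$, because reflecting onto a horizontal edge then pushes $b$-cells onto $a$-cells and multiplies weight by $a/b>1$. Your proposal never identifies which curve to straighten onto, which is where the hypotheses $a\le b$ or $a\ge b$ actually enter. Separately, your move ``replace a wandering sub-chain inside a sub-carpet by the canonical crossing of that sub-carpet'' is circular: that the canonical crossing is cheapest is a rescaled copy of the crossing estimate you are trying to prove.

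You also miss the paper's key structural lemma (Lemma~\ref{lemma1}): if $D_g$ is not a metric then $D_g\equiv 0$ on $K\times K$. This replaces your annulus reduction --- which would additionally require corner-turn crossing estimates and a treatment of chains containing large cells --- by the much cleaner task of bounding below the distance between a single well-chosen pair of points. More importantly, the dichotomy is what powers the ``smash'' operation (Lemma~\ref{lem-smash}): in the $a\ge b$ case, cells that are too large to be reflected toward the fixed diagonal must first be replaced by cheap chains of small descendants, and such cheap chains exist only because one has assumed, for contradiction, that all distances vanish. Your surgery has no substitute for this step. The endgame you describe --- extracting a full unit of weight from the forced cells along the target curve using $2a+b\ge1$ and $a+2b\ge1$ --- is sound and is precisely the paper's ``three to one'' argument (Lemmas~\ref{lemma4} and~\ref{lemma24}), but the reduction to that situation is where the real work lies and where your outline does not yet close.
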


 We separate the proof of the theorem into three parts: the ``only if\," part, the ``if\," part for $(a,b)\in\sigma_1$ and the ``if\," part for $(a,b)\in\sigma_2$.
In what follows, we write $g$ instead of $g_{a,b}$, and $D_g$ instead of $D_{g_{a,b}}$ for simplicity.
\medskip

We remark that the proofs given below strongly rely on the fact that the straight line segments $\overline{p_1p_3}$ and $\overline{p_2p_4}$ are entirely contained in the Sierpinski carpet $K$.

\medskip

The proof of the ``only if\," part is straightforward.

\begin{proof}[Proof of the ``only if\," part of Theorem \ref{th2.2}]
Assume that $D_g$ is a metric on $K$ for some $(a,b)\in (0,1)^2$.

Firstly, for $n\geq1$, we denote $\gamma_n$ to be the chain between $p_1$ and $p_3$ given by a sequence of words with length $n$ such that the cells intersect the bottom line $\overline{p_1p_3}$. That is, $\gamma_1=\{1,2,3\}$, $\gamma_2=\{11,12,13,21,22,23,31,32,33\}$, and so on. By elementary calculations, we obtain
\begin{equation*}
g(\gamma_n)=(2a+b)^n.
\end{equation*}
Since $D_g$ is a metric, $\inf_{n\in \mathbb{Z}^+} (2a+b)^n \geq D_g(p_1,p_3)>0$ so that $2a+b\geq 1$.

Secondly, for $n\geq1$, we denote $\gamma'_n$ to be the chain between $p_2$ and $p_4$ given by a sequence of words with length $n$ such that the cells intersect the straight line $\overline{p_2p_4}$. That is, $\gamma_1^\prime=\{2,3,4\}$, $\gamma_2^\prime=\{22,23,24,38,37,36,42,43,44\}$, and so on. By elementary calculations, we obtain
\begin{equation*}
g(\gamma'_n)=(a+2b)^n.
\end{equation*}
Similarly as above, we have $a+2b\geq1$.
\end{proof}

The proof of the ``if\," part is much more awkward. We first introduce some notations and lemmas.

For any subset $E$ of $\mathbb{R}^2$, we denote by $\partial (E)$ the (Euclidean) boundary of $E$. For any point $p\in \mathbb{R}^2$, we denote by $x_p$ and $y_p$ the $x$-coordinate and $y$-coordinate of $p$, respectively. For any subset $E$ of $\mathbb{R}^2$, we denote by $\pi_x(E)${( similar for $\pi_y(E)$) the orthogonal projection of $E$ onto the $x$-axis, i.e. $\pi_x(E)=\{x_p:\, p\in E\}$.

Given a chain $\gamma=\big(w(1), \ldots,w(m)\big)$, we define the \emph{union} of cells in $\gamma$ by
\begin{equation*}
  \cup \gamma=\bigcup_{i=1}^m K_{w(i)},
\end{equation*}
and call $\big(w(i), \ldots,w(j)\big)$ a \emph{sub-chain} of $\gamma$ for any $i,j\in \{1,\ldots,m\}$ with $i<j$.

For any two subsets $X,Y$ of $K$ and any $z\in K$, we define
\begin{align*}
  &D_g(X,Y)=\inf\{D_g(x,y):\,x\in X,y\in Y\}, \quad \textrm{and} \\
  &D_g(z,X)=\inf\{D_g(z,x):\,x\in X\}.
\end{align*}

For any word $w=w_1w_2\cdots w_n\in \Sigma^*$ and $k\leq n$, we denote $w|_k=w_1w_2\cdots w_k$. Given $i\in \Sigma^1$, we call $wi=w_1 w_2\cdots w_n i$ a \emph{son} of $w$. Furthermore, $wi$ is called an \emph{$a$-son} of $w$ if $i\in \{1,3,5,7\}$, and a \emph{$b$-son} of $w$ if $i\in\{2,4,6,8\}$.

\begin{lemma}\label{lemma1}
  For  $(a,b)\in (0,1)^2$, let $g$ be the weight function as defined in (\ref{eqeq1}). If $D_g$ is not a metric on $K$, then $D_{g}(x,y)=0$ for all $x,y\in K$.
\end{lemma}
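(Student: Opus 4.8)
The plan is to prove the stated dichotomy by combining the exact self-similarity of $g=g_{a,b}$ with the reflection symmetries recorded in Proposition~\ref{th2.1}. Throughout I would work with the set $Z=\{(x,y)\in K\times K:\ D_g(x,y)=0\}$; since $D_g$ is a continuous pseudometric, $Z$ is a closed equivalence relation, and the lemma is exactly the assertion that $Z$ is either the diagonal or all of $K\times K$. The first step is the contracting scaling inequality: for every $w\in\Sigma^*$ and $x',y'\in K$, applying the similarity $F_w$ to a chain between $x'$ and $y'$ yields a chain between $F_w(x')$ and $F_w(y')$ whose total weight is multiplied by $g(w)$, so $D_g(F_w(x'),F_w(y'))\le g(w)\,D_g(x',y')$. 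In particular $Z$ is invariant under each $F_i\times F_i$.

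Next I would observe that every symmetry of the unit square in the dihedral group preserves the labelling of cells into $a$-cells and $b$-cells at every level, hence preserves the weight function $g$ exactly; consequently these symmetries act on $K$ as $D_g$-isometries, and $Z$ is invariant under the full dihedral symmetry group. This lets me transport any zero-distance pair freely among congruent positions.

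The heart of the argument is the reverse, scale-expanding direction: if $x,y\in K_w$ and $D_g(x,y)=0$, then $D_g(F_w^{-1}(x),F_w^{-1}(y))=0$. The idea is to confine cheap chains to $K_w$. Given a chain $\gamma$ between $x$ and $y$ with $g(\gamma)<g(w)$, every cell of $\gamma$ has weight strictly below $g(w)$ and is therefore not an ancestor of $K_w$ (an ancestor $K_u$ would satisfy $g(u)\ge g(w)$); since any two cells are nested or interior-disjoint, each cell of $\gamma$ is either contained in $K_w$ or interior-disjoint from it. Each maximal excursion of $\gamma$ outside $K_w$ enters and leaves through $\partial K_w$, and I would fold it back into $K_w$ by reflecting across the boundary segments as in Proposition~\ref{th2.1}, each reflection multiplying the weight of the reflected sub-chain by at most $\max(a/b,b/a)$. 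This produces a chain inside $K_w$ between $x$ and $y$ of total weight at most a fixed constant times $g(\gamma)$; letting $g(\gamma)\to0$ shows that the infimum over chains confined to $K_w$ is $0$, and since these are precisely the $F_w$-images of chains in $K$, we conclude $D_g(F_w^{-1}(x),F_w^{-1}(y))=g(w)^{-1}\cdot 0=0$. I expect this confinement step to be the main obstacle, since an excursion may run along an entire edge of $K_w$ across several neighbouring cells; it is exactly here that the hypothesis that $\overline{p_1p_3}$ and $\overline{p_2p_4}$ lie in $K$ is used, as it guarantees that the reflected sub-chains remain genuine chains inside $K$.

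With both invariances in hand I would conclude as follows. Suppose $Z$ contains a pair $x_0\neq y_0$. Choosing a smallest cell $K_w$ containing both and applying the expanding direction reduces to a zero-distance pair lying in two distinct first-level cells; moving the two points to corners by the dihedral symmetries and Proposition~\ref{th2.1}, I obtain $D_g(p,q)=0$ for two corners $p,q$ of $K$ on a common edge, say $D_g(p_1,p_3)=0$. Transitivity of the dihedral action on the corners $\{p_1,p_3,p_5,p_7\}$ then gives $D_g(p_i,p_j)=0$ for all such pairs, and applying the forward maps $F_v$ shows that the four corners of every cell $K_v$ are pairwise at $D_g$-distance $0$. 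Because adjacent cells share corner points and the cell-adjacency graph at each level is connected, all cell corners of a fixed level lie in a single $Z$-class; this collection of corners is dense in $K$, so the closedness of $Z$ forces $D_g\equiv 0$, which is the claim.
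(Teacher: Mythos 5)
Your overall skeleton is reasonable: $Z=\{D_g=0\}$ is a closed equivalence relation, it is invariant under the dihedral symmetries and under each $F_i\times F_i$, and your endgame (all corners of all $n$-cells fall into one $Z$-class, corners are dense, $Z$ is closed) is a clean way to finish --- arguably tidier than the paper's closing appeal to horizontal and vertical segments. But the two load-bearing steps are not there. First, the confinement step as you describe it fails: reflecting an excursion ``across the boundary segments'' of $K_w$ in the sense of Proposition~\ref{th2.1} only makes sense for cells lying within one cell-layer of the reflecting line (reflection across a line containing an edge of $K_w$ does not preserve $K$ globally --- it can send a cell into a hole or outside the square), whereas an excursion can leave $K_w$ through one edge, wander across the whole carpet, and return through another. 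One must fold layer by layer, with a controlled number of reflections; the paper avoids doing this in two directions at once by first extracting a \emph{minimal strip-crossing sub-chain} $\widetilde\gamma_m$ whose $x$-projection is exactly an interval of length $3^{-n}$, and then folding only vertically, which is where the constant $c^{3^n}$ in \eqref{eq:c-def} comes from. You flag this as ``the main obstacle'' but do not resolve it.

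The second and more serious gap is the sentence ``moving the two points to corners by the dihedral symmetries and Proposition~\ref{th2.1}, I obtain $D_g(p,q)=0$ for two corners.'' There is no such mechanism: the dihedral group has eight elements and cannot move an arbitrary zero-distance pair to a pair of corners, and Proposition~\ref{th2.1} reflects chains, not points. Note also that your expansion step only yields a zero-distance pair between the \emph{original} endpoints $x,y$ of the chain, rescaled --- two distinct points not contained in a common $1$-cell, which can still be arbitrarily close to a shared edge and bear no relation to the corners. The paper bridges exactly this gap in two substantial moves: (i) the strip-crossing construction plus a compactness/subsequence argument produces a zero-distance pair $q^*,s^*$ lying on two \emph{opposite edges} of a single $n$-cell, which rescales to a zero-distance pair $q\in\overline{p_1p_7}$, $s\in\overline{p_3p_5}$; and (ii) the telescoping argument using the mirrored pair $q'=(1-x_q,y_q)$, $s'=(1-x_s,y_s)$ and the identities $F_7(s)=F_6(s')$, $F_5(q)=F_6(q')$ shows $D_g\bigl(F_{7^n}(q),F_{5^n}(s)\bigr)=0$ for all $n$, whence $D_g(p_7,p_5)=0$ by continuity. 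Without an argument of this kind (or a substitute for it), the reduction to corners --- and hence the whole proof --- does not go through.
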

\begin{proof}
Let $q,s$ be two distinct points in $K$ with $D_g(q,s)=0$. We pick a positive integer $n$ satisfying $2\cdot 3^{-n}<\max\{|x_q-x_s|,|y_q-y_s|\}$. Without loss of generality, we assume that $x_s>x_q+2\cdot 3^{-n}$. Define $\alpha_n=\lceil  3^n x_q\rceil \cdot 3^{-n}$, where $\lceil t \rceil$ is the minimum integer greater than or equal to $t$.

Let $m_0\in \mathbb{Z}^+$ satisfy $m_0^{-1}<\min \{ a^n, b^n\}$.
From $D_g(q,s)=0$, we know that for each positive integer $m\geq m_0$, there exists a chain $\gamma_m$ between $q$ and $s$ such that $g(\gamma_m)<m^{-1}$. From the definition of $m_0$, every cell in $\gamma_m$ has length greater than $n$. Thus, we may pick two points $q_m$ and $s_m$ in $K$, and a sub-chain $\widetilde{\gamma}_m$ of $\gamma_m$ between $q_m$ and $s_m$, such that $x_{q_m}=\alpha_n$, $x_{s_m}=\alpha_n+3^{-n}$, and $\pi_x(\cup \widetilde{\gamma}_m)=[\alpha_n,\alpha_n+3^{-n}]$. By taking subsequence if necessary, we may assume that there exist two $n$-cells $K_w$ and $K_v$ such that $q_m\in K_w$ and $s_m\in K_v$ for all $m\geq m_0$, and $\pi_x(K_w)=\pi_x(K_v)=[\alpha_n,\alpha_n+3^{-n}]$.

Denote $\beta_n=\min\big(\pi_y(K_w)\big)$.
For each $m\geq m_0$, we define
\begin{align*}
  Y_m=\lceil 3^n \max\big(\pi_y(\cup \widetilde{\gamma}_m)\big)\rceil \cdot 3^{-n}, \qquad y_m=\lfloor 3^n \min\big(\pi_y(\cup \widetilde{\gamma}_m)\big) \rfloor\cdot 3^{-n},
\end{align*}
where $\lfloor t \rfloor$ is the maximum integer less than or equal to $t$. If $Y_m>\beta_n+3^{-n}$, for all cells of $\widetilde{\gamma}_m$ in $[\alpha_n,\alpha_n+3^{-n}]\times [Y_m-3^{-n},Y_m]$, we reflect them along the line $y=Y_m-3^{-n}$. Similarly, if $y_m<\beta_n$, we reflect all cells of $\widetilde{\gamma}_m$ in $[\alpha_n,\alpha_n+3^{-n}]\times [y_m,y_m+3^{-n}]$ along the line $y=y_m+3^{-n}$. We do this repeatedly until all cells are contained in $K_w$. Then we obtain a point $s_m^\prime\in K_w$ with $x_{s_m^\prime}=\alpha_n+3^{-n}$, and a chain $\gamma_m^\prime$ between $q_m$ and $s_m^\prime$ such that $\cup \gamma_m^\prime \subset K_w$. By Proposition~\ref{th2.1}, we know that
\begin{equation}\label{eq:c-def}
  g(\gamma_m^\prime)\leq c^{3^n} g(\gamma_m)<c^{3^n} m^{-1}, \quad \textrm{ where } c=\max\{a/b,b/a\}.
\end{equation}



By taking subsequence if necessary, we may assume that $q_m$ converges to $q^*$, and $s_m^\prime$ converges to $s^*$.
Then $q^*,s^*\in K_w$ with $x_{q^*}=\alpha_n$, $x_{s^*}=\alpha_n+3^{-n}$, and $D_g(q^*,s^*)=0$.






Using the self-similarity, we can dilate $q^{*}, s^*$ to the two opposite sides of $K$. From above, we may assume that $q\in \overline{p_1p_7}$, $s\in \overline{p_3p_5}$. Let $q'=(1-x_q,y_q)$ and $s'=(1-x_s,y_s)$.
 Then $q^\prime\in \overline{p_3p_5}$ and $s^\prime\in \overline{p_1p_7}$. By symmetry, $D_{g}(s', q')=0$. Using self-similarity, we have $D_{g}\big(F_i(q),F_i(s)\big)=D_{g}\big(F_i(s'), F_i(q')\big)=0$ for $i=1,\cdots,8$.

Notice that $F_7(s)=F_6(s')$ and $F_5(q)=F_6(q')$. Thus
\begin{align*}
    &D_{g}\big(F_7(q), F_5(s)\big)\\
    &\leq D_{g}\big(F_7(q), F_7(s)\big) +
    D_{g}\big(F_7(s), F_5(q)\big) +
    D_{g}\big(F_5(q), F_5(s)\big) \\
    &=D_{g}\big(F_7(q), F_7(s)\big) +
    D_{g}\big(F_6(s'), F_6(q')\big) +
    D_{g}\big(F_5(q), F_5(s)\big) =0.
\end{align*}
 By using this repeatedly, we see that
  $D_{g}\big(F_{7^n}(q), F_{5^n}(s)\big)=0$ for all integers $n\geq 0$.
  By the continuity of $D_{g}$, we have $D_{g}(p_7,p_5)=0$.
  Then by symmetry, $D_{g}(p_1,p_7)=D_{g}(p_1,p_3)=D_{g}(p_3,p_5)=0$.

  Since any two points in $K$ can be connected by at most countably many vertical and horizontal line segments contained in $K$, by using the self-similarity and the triangle inequality of $D_g$, we must have that $D_{g}(x,y)=0$ for any $x,y\in K$. This completes the proof.
\end{proof}

\begin{lemma}\label{lemma4}
Let $(a,b)\in\sigma_2$, and $\gamma$ be a chain between $p_2$ and $p_8$. Assume that all the cells in $\gamma$ intersect $\overline{p_2p_8}$. Then
\begin{equation*}
g(\gamma)\geq\frac b{2a+b}.
\end{equation*}
\end{lemma}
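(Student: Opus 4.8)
```latex
The plan is to set up a lower bound for the total weight of any chain between $p_2$ and $p_8$ whose cells all meet the left edge segment $\overline{p_2p_8}$, exploiting that $\overline{p_2p_8}$ lies on the boundary line $x=0$ of $K$, together with the assumption $(a,b)\in\sigma_2$, i.e.\ $a+2b\ge 1$ and $a\ge b$. The segment $\overline{p_2p_8}$ runs from $p_2=(0,0)\cdots$ — more precisely, $p_2=(1/2,0)$ and $p_8=(0,1/2)$, so $\overline{p_2p_8}$ is the diagonal segment of the unit cell; I would first verify which cells can actually touch this segment and at what scales. The guiding principle is that a chain connecting the two endpoints of $\overline{p_2p_8}$ must, at every scale $n$, ``project onto'' a covering of that segment, and the weight contributed at scale $n$ by cells meeting the segment is governed by the factor $a+2b$ (this is precisely the constant appearing in the ``only if'' computation for $\gamma_n'$). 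The target bound $\frac{b}{2a+b}$ suggests I should compare the optimal chain with the simplest ``straight'' chain and track the worst-case ratio of $a$-cells to $b$-cells along the segment.

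\medskip

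First I would establish the base case and a self-similar recursion. Consider the three first-level cells that meet $\overline{p_2p_8}$; using the labelling with $p_i$ as given, the cells $K_1, K_2, K_8$ (or the appropriate triple) are the only $1$-cells intersecting this segment, with weights $a, b, b$ summing to $a+2b$. Any chain from $p_2$ to $p_8$ must enter and traverse the cells meeting the segment, and I would argue that the total weight decomposes according to which $1$-cells the chain visits. The key structural fact is that a chain realizing (nearly) the infimum can be normalized, via Proposition~\ref{th2.1} and the reflection operations introduced for Lemma~\ref{lemma1}, so that all cells stay within the cells touching $\overline{p_2p_8}$ without increasing the total weight by more than the controlled factor $c$; but here, since we want a clean constant $\frac{b}{2a+b}$ rather than an asymptotic statement, I expect the cleaner route is a direct inductive estimate on the scale.

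\medskip

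The main inductive step I would set up is: let $m$ denote the infimum of $g(\gamma)$ over all chains $\gamma$ between the two endpoints of $\overline{p_2p_8}$ with all cells meeting that segment. Subdividing each of the three boundary-touching $1$-cells and using self-similarity, each contributes a scaled copy of the same type of traversal problem, yielding a relation of the form $m \ge$ (sum of $r_i \cdot m$ over the relevant sub-cells), which after using $a+2b\ge 1$ collapses to the stated bound $m\ge \frac{b}{2a+b}$. Concretely, I expect to show that the weight of a minimal chain is bounded below by the weight of a single $b$-cell contribution divided by the geometric series $1+\frac{a}{a+2b}+\cdots$ or an analogous renormalization, and that under $a+2b\ge 1$ this series is controlled so that the leading $b$-term dominates, producing $\frac{b}{2a+b}$. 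Here the condition $a\ge b$ ensures the extremal configuration is the one using the fewest $b$-cells.

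\medskip

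The hard part will be making rigorous the claim that a minimizing (or nearly minimizing) chain can be taken to ``stay on the segment'' — i.e.\ that detours into cells not meeting $\overline{p_2p_8}$ never help and that the reflection/projection operations reduce the general case to cells lying along the diagonal while keeping the weight comparable. Controlling the bookkeeping of how many $a$-type versus $b$-type cells appear along the diagonal segment at each scale, and verifying that the worst case yields exactly the constant $\frac{b}{2a+b}$ (and not something larger or scale-dependent), is the delicate combinatorial core; I would handle it by a careful induction on the minimal cell-length occurring in the chain, peeling off the coarsest scale and applying self-similarity to the finer remainder, with Proposition~\ref{th2.1} supplying the exact weight ratios at each reflection.
```
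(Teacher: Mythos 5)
Your sketch has the right flavor (cover $\overline{p_2p_8}$ scale by scale and exploit $a+2b\ge 1$ through a ``three children to one parent'' collapse), but it misses the one idea that actually makes the lemma work, and the step you flag as the hard part is not part of this lemma at all. The hypothesis already states that \emph{every} cell of $\gamma$ meets $\overline{p_2p_8}$, so there is nothing to prove about detours or about normalizing a near-minimizing chain onto the segment; that reduction happens elsewhere in the paper, before this lemma is invoked.

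The genuine difficulty is different: cells meeting $\overline{p_2p_8}$ need not \emph{cover} it (a cell can touch the segment at a single point, and consecutive cells can have very different word lengths), so neither your ``covering at every scale'' step nor your self-similar recursion can be run on the chain's cells directly. Moreover the recursion you propose, $m\ge (r_1+r_2+r_3)\,m=(a+2b)\,m$, is vacuous when $a+2b\ge 1$, and the geometric-series heuristic does not produce the constant $\frac{b}{2a+b}=(1+2a/b)^{-1}$. The paper's fix is to pad each $w\in\gamma$ by its ``on-line neighbors'' (same-length words $u$ with $K_u\cap\overline{p_2p_8}\ne\emptyset$ and $K_w\cap K_u$ a line segment): the orthogonal projection of $K_w$ onto the segment is contained in the union of $K_w$ and these neighbors, so the padded family $\Lambda(\gamma)$ covers $\overline{p_2p_8}$ by connectedness of the chain, while (using $a\ge b$) the padding costs at most a factor $1+2a/b$ in total weight. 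One then discards nested cells and repeatedly replaces the one $a$-son and two $b$-sons of a deepest parent by the parent itself — this is where $a+2b\ge 1$ enters, and it shows the covering has weight at least $g(\emptyset)=1$. Dividing by $1+2a/b$ gives exactly $\frac{b}{2a+b}$. Without the neighbor-padding step (or an equivalent device that converts ``every cell touches the segment'' into ``a weight-comparable family covers the segment''), your argument does not close.
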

\begin{proof}
First, we notice that the following fact holds: given a word $w\in \Sigma^*$ with $K_w\cap \overline{p_2p_8}\not=\emptyset$, there exist exactly one $a$-son and two $b$-sons of $w$, such that they intersect $\overline{p_2p_8}$.

 Let $w$ be a finite word such that $K_w$ intersects $\overline{p_2p_8}$. We call a word $u$ of the same length with $w$ an \textit{``on-line neighbor"} of $w$ if $K_u$ intersects $\overline{p_2p_8}$ and $K_w\cap K_u$ is a line segment. From the above fact, it is clear that $w$ has one or two on-line neighbors, where ``one" only happens when $p_2$ or $p_8$ is in $K_w$.

For each $w$ in $\gamma$, we denote by $J(w)$ the union of $w$ and all its on-line neighbors. Then it is clear that
 \begin{equation}\label{eq1}
\sum\limits_{u\in J(w)}g(u)\leq (1+2a/b)g(w).
\end{equation}


 Let $\Lambda(\gamma)=\bigcup\limits_{w\in \gamma}J(w)$. We claim that
\begin{equation*}
 \overline{p_2p_8}\subset \bigcup\limits_{u\in\Lambda(\gamma)}K_u.
\end{equation*}
In fact, for $w\in\gamma$, we define $\pi(K_w)$ to be the orthogonal projection of the cell $K_w$ on the line segment $\overline{p_2p_8}$. Since $\gamma$ is connected and $\pi$ is continuous, we have
\begin{equation*}
  \overline{p_2p_8}\subset\bigcup\limits_{w\in \gamma}\pi(K_w).
\end{equation*}
Combining this with another fact that
   $\pi(K_w) \subset\bigcup\limits_{u\in J(w)}K_u,$
we know that the claim holds.

By (\ref{eq1}), we have
 \begin{equation}\label{eq2}
\sum\limits_{u\in\Lambda(\gamma)}g(u)\leq \sum\limits_{w\in \gamma}\sum\limits_{u\in J(w)}g(u)\leq(1+2a/b)g(\gamma).
\end{equation}
 If there are two words $u$ and $v$ in $\Lambda(\gamma)$ satisfying $K_u\subset K_v$, we remove $u$ from $\Lambda(\gamma)$. Do this repeatedly on $\Lambda(\gamma)$ until there are no two such words. Let $\Lambda_1(\gamma)$ be the final set, then $\Lambda_1(\gamma)$ has the following three properties:
\begin{enumerate}
  \item[1.] any two cells in $\Lambda_1(\gamma)$ are not contained in each other;
  \item[2.] $\overline{p_2p_8}\subset\Lambda_1(\gamma);$
  \item[3.] $\sum\limits_{u\in\Lambda_1(\gamma)}g(u)\leq \sum\limits_{u\in\Lambda(\gamma)}g(u).$
\end{enumerate}


Let $k=\max\{|w|:w\in\Lambda_1(\gamma)\}$. Pick one word $u$ in $\Lambda_1(\gamma)$ such that $|u|=k$. Let $u^*=u|_{k-1}$. Then $\Lambda_1(\gamma)$ must contains the $a$-son and two $b$-sons of $u^*$ intersecting $\overline{p_2p_8}$. Now replace in $\Lambda_1(\gamma)$ the three sons by $u^*$. By $a+2b\geq1$, we have
 \begin{equation}\label{eq3}
g(u^*)\leq (a+2b)g(u^*)=\text{the total weight of the three sons of $u^*$}.
\end{equation}
Replace in $\Lambda_1(\gamma)$ all the $k$-cells by $(k-1)$-cells in the pattern ``three to one". Then do this for $(k-1)$-cells and so on. Finally, we obtain a $0$-cell.
By (\ref{eq3}), we see that in each replacement, $\sum\limits_{u\in\Lambda_1(\gamma)}g(u)$ does not increase, thus we have
 \begin{equation}\label{eq4}
\sum\limits_{u\in\Lambda(\gamma)}g(u)\geq\sum\limits_{u\in\Lambda_1(\gamma)}g(u)\geq g(\emptyset)=1.
\end{equation}
By (\ref{eq2}) and (\ref{eq4}), we have that
 \begin{equation*}
g(\gamma)\geq(1+2a/b)^{-1}=\frac b{2a+b}.
\end{equation*}
\end{proof}

Using a similar ``three to one'' argument, noticing that $2a+b\geq 1$ when $(a,b)\in\sigma_1$, we have the following lemma.
\begin{lemma}\label{lemma24}
Let $(a,b)\in\sigma_1$, and $\gamma$ be a chain between $p_1$ and $p_3$, assume that all the cells in $\gamma$ intersect $L_0=\overline{p_1p_3}$. Then
\begin{equation*}
g(\gamma)\geq1.
\end{equation*}
\end{lemma}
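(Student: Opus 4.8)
The plan is to mirror the proof of Lemma~\ref{lemma4} almost verbatim, replacing the diagonal segment $\overline{p_2p_8}$ by the horizontal segment $L_0=\overline{p_1p_3}$ and adjusting the relevant cell-counting fact and the critical inequality. The governing fact here is that for any word $w$ with $K_w\cap L_0\neq\emptyset$, exactly three sons of $w$ meet $L_0$, namely the two $a$-sons indexed by $1,3$ and the one $b$-son indexed by $2$; equivalently, along the bottom edge the cells split into the ``$a,b,a$'' pattern, so that a $w$-cell on the line has total son-weight $(2a+b)g(w)$ in the ``three to one'' step. This is the analogue of the ``one $a$-son, two $b$-sons'' fact used for the diagonal.

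With this in hand, I would run the same four-step scheme. First, for each $w\in\gamma$ define $J(w)$ to be the union of $w$ with its on-line neighbors (words $u$ of the same length with $K_u\cap L_0\neq\emptyset$ and $K_w\cap K_u$ a segment); since each $w$ has at most two on-line neighbors and a neighbor differs from $w$ by the $a/b$ versus $b/a$ weight ratio of Proposition~\ref{th2.1}, one obtains $\sum_{u\in J(w)}g(u)\le(1+2b/a)g(w)$ or the corresponding bound — but crucially I will not need this constant, because unlike Lemma~\ref{lemma4} the desired conclusion is simply $g(\gamma)\ge 1$ rather than a constant bounded away from $1$. Indeed, setting $\Lambda(\gamma)=\bigcup_{w\in\gamma}J(w)$, the same projection-and-continuity argument (projecting each $K_w$ orthogonally onto $L_0$ and using that $\gamma$ is connected) shows $L_0\subset\bigcup_{u\in\Lambda(\gamma)}K_u$, and in fact since every cell of $\gamma$ already meets $L_0$ we may work directly with $\gamma$ itself as the covering family.

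Second, I prune to $\Lambda_1(\gamma)$ by deleting any cell contained in a larger cell of the family, preserving the covering of $L_0$ and not increasing the total weight. Third, I apply the ``three to one'' collapse: taking $k=\max\{|u|:u\in\Lambda_1(\gamma)\}$ and a word $u$ of length $k$, the family must contain the two $a$-sons and one $b$-son of $u^*=u|_{k-1}$ that meet $L_0$, and by $2a+b\ge1$ (which holds on $\sigma_1$) we have $g(u^*)\le(2a+b)g(u^*)=$ the total weight of those three sons, so replacing them by $u^*$ does not increase $\sum_{u\in\Lambda_1(\gamma)}g(u)$. Iterating down to the empty word yields $\sum_{u\in\Lambda_1(\gamma)}g(u)\ge g(\emptyset)=1$. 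Finally, since here the relevant covering family is $\gamma$ itself (each cell of $\gamma$ meets $L_0$), the chain of inequalities gives $g(\gamma)\ge\sum_{u\in\Lambda_1(\gamma)}g(u)\ge1$ directly, without the $(1+2a/b)^{-1}$ deficit that appeared in the diagonal case.

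The main obstacle, and the only place requiring genuine care, is verifying the son-splitting pattern along $L_0$ and confirming that the ``three to one'' inequality uses exactly $2a+b\ge1$: along the horizontal bottom edge the three cells on the line are indexed $1,2,3$ with weights $a,b,a$, giving son-weight $(2a+b)g(u^*)$, whereas along the diagonal the split was $a,b,b$ giving $(a+2b)g(u^*)$. One must check that this pattern is genuinely self-similar at every scale — i.e.\ that a line-meeting $k$-cell always has precisely its $1$-, $2$-, $3$-indexed sons meeting the continuation of the line — which follows from the fact, emphasized in the text, that $\overline{p_1p_3}\subset K$ and the line passes through the cells in the fixed $a,b,a$ arrangement. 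Once this combinatorial fact is pinned down, the rest is a routine transcription of the previous lemma's argument.
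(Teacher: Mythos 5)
Your proposal is correct and is exactly the ``three to one'' argument the paper invokes (the paper omits the proof of Lemma~\ref{lemma24}, deferring to the argument of Lemma~\ref{lemma4}): the sons of a bottom-row cell meeting $L_0$ are indexed $1,2,3$ with weights $a,b,a$, so the collapse step uses $2a+b\geq 1$. You also correctly identify why the bound here is $1$ rather than a smaller constant: since $\overline{p_1p_3}\subset K$, a cell meeting $L_0$ contains its entire bottom edge, so $\gamma$ itself covers $L_0$ and the on-line-neighbor inflation of Lemma~\ref{lemma4} is unnecessary.
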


\begin{lemma}\label{lem-smash}
Assume that $D_g$ is not a metric on $K$. Then there exist two positive integers $N_0$ and $N_1$, such that for any two cells $K_w$ and $K_u$ with $|w|=|u|=N_0$, there is a chain $\gamma_{w,u}$ in $K$ starting with $K_w$ and ending with $K_u$ such that the following two conditions hold:
\begin{enumerate}
\item[1.] $g(\gamma_{w,u})\leq1;$
\item[2.] $N_0\leq |v|\leq N_1$ for any $v\in\gamma_{w,u}$.
\end{enumerate}
\end{lemma}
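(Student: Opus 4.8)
The plan is to begin by invoking Lemma~\ref{lemma1}: since $D_g$ is assumed not to be a metric, we have $D_g(x,y)=0$ for \emph{all} $x,y\in K$. The whole construction then rests on a single elementary estimate relating weight to length. Writing $\eta=\min\{a,b\}\in(0,1)$, every word $v$ satisfies $g(v)=\prod_k r_{v_k}\geq \eta^{|v|}$; consequently, if a chain has total weight strictly below $\eta^{N_0}$, then each of its cells $K_v$ obeys $g(v)<\eta^{N_0}$ and hence $|v|>N_0$. This observation is what makes the lower length bound in condition~2 automatic, rather than something to be arranged by hand.

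Concretely, set $\theta=\max\{a,b\}$ and fix $N_0$ so large that $2\theta^{N_0}<1$, and put $\epsilon=\min\{\eta^{N_0},\,1-2\theta^{N_0}\}>0$. Given an arbitrary ordered pair of $N_0$-cells $K_w,K_u$, I would choose points $x\in K_w$ and $y\in K_u$; since $D_g(x,y)=0<\epsilon$, there is a chain $\gamma_0=\big(w(1),\ldots,w(m)\big)$ between $x$ and $y$ with $g(\gamma_0)<\epsilon$. By the estimate above every $w(i)$ has $|w(i)|>N_0$. I then prepend $w$ and append $u$ to obtain
\[
\gamma_{w,u}=\big(w,w(1),\ldots,w(m),u\big);
\]
this is a legitimate chain because $x\in K_w\cap K_{w(1)}$ and $y\in K_{w(m)}\cap K_u$, it starts with $K_w$ and ends with $K_u$, and all of its cells have length at least $N_0$ (the two new cells have length exactly $N_0$, the rest strictly more). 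Its total weight is $g(w)+g(\gamma_0)+g(u)<\theta^{N_0}+\epsilon+\theta^{N_0}\leq 1$, which is condition~1. The degenerate case $w=u$ is handled trivially by the one-cell chain $(w)$.

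Finally, the single integer $N_0$ works uniformly for every pair, since its choice depends only on $a,b$. For the upper bound I would simply use finiteness: there are only $8^{N_0}\cdot 8^{N_0}$ ordered pairs of $N_0$-cells, and each selected chain $\gamma_{w,u}$ is a finite sequence, so $N_1:=\max_{w,u}\max_{v\in\gamma_{w,u}}|v|$ is a well-defined finite integer and condition~2 holds in full. The step I expect to require the most care is precisely the lower length bound: the naive idea of removing oversized cells from a small-weight chain by subdivision or truncation changes the total weight in an uncontrolled way, so the real content is recognizing that pushing the weight threshold $\epsilon$ below $\eta^{N_0}$ \emph{forbids} oversized cells for free. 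After that, only the bookkeeping of the weight inequality and the finiteness argument for $N_1$ remain.
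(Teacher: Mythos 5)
Your proposal is correct and follows essentially the same route as the paper: invoke Lemma~\ref{lemma1} to get $D_g\equiv 0$, extract a connecting chain of small total weight, sandwich it between $K_w$ and $K_u$, and obtain $N_1$ by finiteness over the $8^{N_0}\cdot 8^{N_0}$ pairs. The one place you go beyond the paper is the lower length bound in condition~2: the paper only demands $g(\eta_{w,u})\leq 1/2$ for the connecting chain, which by itself does not force $|v|\geq N_0$ for its cells, whereas your choice of the threshold $\epsilon\leq \eta^{N_0}$ (using $g(v)\geq \eta^{|v|}$) makes that bound automatic — a small but genuine tightening of the written argument.
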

\begin{proof}
Let $N_0$ be the smallest integer such that $\max\{a^{N_0},b^{N_0}\}\leq\frac14$. Arbitrarily pick two cells $K_w$ and $K_u$ with $|w|=|u|=N_0$. In the case that $K_w\cap K_u\neq\emptyset$, we define $\gamma_{w,u}=\{w,u\}$ so that
\begin{equation*}
g(\gamma_{w,u})=g(w)+g(u)\leq2\max\{a^{N_0},b^{N_0}\}\leq\frac12.
\end{equation*}
In the case that $K_w\cap K_u=\emptyset$, we have $D_g(K_w,K_u)=0$ by using Lemma~\ref{lemma1}. Thus there is a chain $\eta_{w,u}$ connecting the two sets $K_w$ and $K_u$ with $g(\eta_{w,u})\leq\frac12$. Let $\gamma_{w,u}$ be the chain constructed by adding $\eta_{w,u}$ in between $w$ and $u$. Then we have
\begin{equation*}
g(\gamma_{w,u})=g(w)+g(\eta_{w,u})+g(u)\leq\frac12+\frac12=1.
\end{equation*}

Set $N_{w,u}=\max\{|v|:v\in\gamma_{w,u}\}.$
Let $N_1$ be the maximum of $N_{w,u}$ among all the pairs $w,u$ in $\Sigma^{N_0}$. Then the lemma holds with $N_0$ and $N_1$.
\end{proof}

\bigskip

%

The main idea of the proof of the ``if\," parts is that, suppose  $D_g$ is not a metric, then for a given chain $\gamma$, we do a series of operations on $\gamma$ to get a new chain $\widetilde\gamma$ satisfying the following two properties:

$1.$ there is a constant $C>0$ independent of $\gamma$ such that $g(\widetilde\gamma)\leq C g(\gamma)$;

$2.$ all the cells in $\widetilde\gamma$ should intersect a certain curve $\ell$.

\noindent Then by computation, one obtains that the total weight of any chain satisfying the second property has a positive lower bound to get a contradiction, and this implies that $D_g$ is a metric.

\subsection{The case for $\sigma_2$.}\label{sec2.1}
We first deal with the case when $(a,b)\in \sigma_2$. Before proceeding, we give some notations.

For a cell $K_w$, we define the \textit{center} of $K_w$ to be $F_w(1/2,1/2)$.

Throughout this subsection, for all $n\geq 0$, we denote $q_n=F_{3[6]^n}(1/2,1/2)$ and $q_n^\prime=F_{4[2]^n}(1/2,1/2)$, where we use $[i]^n$ to denote the word $w=w_1 w_2\cdots w_n$ with $w_k=i$ for all $1\leq k\leq n$. For each $n\geq 0$, we define $\ell_n$ and $\ell_n'$ to be the straight lines passing through the points $q_n$ and  $q'_n$ with the same slope $-1$, separately. Clearly, whenever $\ell_n$(or $\ell_n'$) intersects the interior of a cell with word length at least $n+1$, then the center of the cell lies in $\ell_n$(or $\ell_n'$).





Let $\ell_*$ and $\ell_*^\prime$ be the lines passing through the points $q_0$ and $q_0^\prime$ with the same slope $1$. For $n\geq 0$,  we denote by $M_n$ the rectangle enclosed by lines $\ell_*$, $\ell_*^\prime$, $\ell_n$ and $\ell_n^\prime$.

Let $\Omega$ be the hexagon enclosed by lines $y=2/9$, $y=4/9$, $\ell_0$, $\ell_0^\prime$, $\ell_*$ and $\ell_*^\prime$. See Figure~\ref{figure: omega-fig}.


\begin{figure}[htbp]
  \begin{center}
      {\includegraphics[height=5cm]{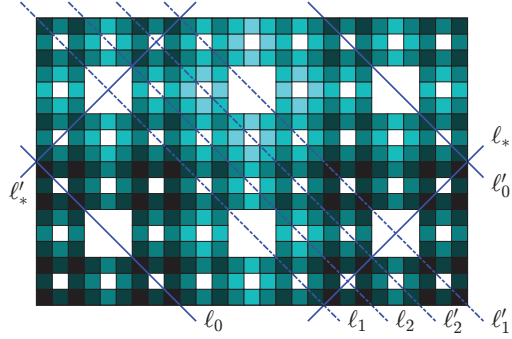}}
  \end{center}
  \caption{The hexagon $\Omega$ and lines.}
  \label{figure: omega-fig}
\end{figure}

It is easy to check that the following facts hold for all $n\geq 1$.
\begin{enumerate}
  \item Let $d_n=\frac{\sqrt{2}}{2}3^{-n-1}$. Then $d_n=d(\ell_{n},\ell_{n}^\prime)$, where $d(\ell_n,\ell_{n}^\prime)$ is the distance between the lines $\ell_n$ and $\ell_{n}^\prime$. Clearly, $d_n=d(\ell_{n-1},\ell_n)=d(\ell_n^\prime,\ell_{n-1}^\prime)$.
      Thus line $\ell_n^\prime$ is the reflection of $\ell_{n-1}$ through the line $\ell_n$. Similarly, line  $\ell_n$ is the reflection of $\ell_{n-1}^\prime$ through the line $\ell_n^\prime$.
  \item Let $K_w$ be an $(n+1)$-cell centered in $\Omega\cap M_{n-1}$. Then the center of $K_w$ must lie in $\ell_n,\ell_{n}^\prime$,$\ell_{n-1}$ or $\ell_{n-1}^\prime$. Thus, if $K_w$ is an $(n+1)$-cell centered in $\Omega\cap (M_{n-1}\setminus M_{n})$, then the center of $K_w$ must lie in $\ell_{n-1}$ or $\ell_{n-1}^\prime$. Furthermore, if we reflect an $(n+1)$-cell centered in $\Omega\cap \ell_{n-1}$ through $\ell_n$, then we obtain an $(n+1)$-cell centered in $\ell_n^\prime$. Similarly, if we reflect an $(n+1)$-cell centered in $\Omega\cap \ell_{n-1}^\prime$ through $\ell_n^\prime$, then we obtain an $(n+1)$-cell centered in $\ell_n$.
  \item Let $K_w$ be a cell centered in $\Omega\cap M_{n}$ with length at least $n+1$. Then $K_{w|_{n+1}}$ is also centered in $\Omega\cap M_{n}$.
  \item Let $K_w$ be a cell centered in $\Omega\cap (M_{n-1}\setminus M_n)$ with length at least $n+1$. If we reflect $K_w$ through $\ell_n$ if the center of $K_{w|_{n+1}}$ lies in $\ell_{n-1}\cup\ell_n$, or through $\ell_n^\prime$ otherwise, then we obtain a cell centered in $\Omega\cap M_n$. We also remark that the center of $K_{w|_{n+1}}$ lies in $\ell_{n-1}\cup\ell_n$ if and only if $K_w$ is centered in the closed strip between $\ell_{n-1}$ and $\ell_n$.
\end{enumerate}

\begin{lemma}\label{lem-reflection}
Let $n\geq 1$. For any cell $K_w$ centered in $\Omega\cap (M_{n-1}\setminus M_{n})$ with length at least $n+1$, let $K_u$ be the reflected cell of $K_w$ through $\ell_n$ if the center of $K_{w|_{n+1}}$ lies in $\ell_{n-1}\cup\ell_n$, or through $\ell_n^\prime$ otherwise. Then $K_u$ is centered in $\Omega\cap M_n$ and
\begin{equation}\label{eq:ref-ineq}
  g(u)\leq g(w).
\end{equation}
\end{lemma}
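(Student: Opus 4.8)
The first assertion is exactly fact (4) recorded just above the lemma, so the whole content is the weight bound \eqref{eq:ref-ineq}. My plan is to first strip off everything beyond level $n+1$ by a symmetry argument, and then to reduce \eqref{eq:ref-ineq} to a purely combinatorial comparison of the letter counts of the two $(n+1)$-cells $K_{w|_{n+1}}$ and $K_{u|_{n+1}}$.

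First I would observe that, by facts (1), (2) and (4), the reflection $R$ (through $\ell_n$, or through $\ell_n'$ in the other case) carries the $(n+1)$-cell $K_{w|_{n+1}}$ onto the $(n+1)$-cell $K_{u|_{n+1}}$. Since $F_{w|_{n+1}}$ and $F_{u|_{n+1}}$ are orientation-preserving similarities with the same ratio $3^{-(n+1)}$ and $R$ is an isometry, the composition $\phi=F_{u|_{n+1}}^{-1}\circ R\circ F_{w|_{n+1}}$ is an isometry of $K$ onto $K$, i.e. an element of the dihedral symmetry group $D_4$ of the carpet. Every element of $D_4$ permutes the four corner maps $\{F_1,F_3,F_5,F_7\}$ among themselves and the four edge maps $\{F_2,F_4,F_6,F_8\}$ among themselves, hence preserves the value $r_j$ of each letter; applying $\phi$ letterwise to the tail $w_{n+2}\cdots w_{|w|}$ produces exactly the tail of $u$ with the same weight. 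Therefore $g(u)/g(w)=g(u|_{n+1})/g(w|_{n+1})$, and it suffices to prove $g(u|_{n+1})\le g(w|_{n+1})$.

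Writing $g(v)=a^{\#_a(v)}b^{\#_b(v)}$ with $\#_a(v)+\#_b(v)=n+1$ fixed, and recalling $a\ge b$ on $\sigma_2$, this inequality is equivalent to the claim that $K_{u|_{n+1}}$ contains no more corner-type letters than $K_{w|_{n+1}}$. By facts (1)--(4), $K_{w|_{n+1}}$ is centered on $\ell_{n-1}$ and $K_{u|_{n+1}}$ on $\ell_n'$, and these two $(n+1)$-cells are diagonal neighbours, their centers differing by $(3^{-n-1},3^{-n-1})$. I would then insert the intermediate $(n+1)$-cell $K_v$ centered on $\ell_n$ that shares an edge with each of them and apply Proposition~\ref{th2.1} to the two edge-reflections, which gives $g(u|_{n+1})/g(w|_{n+1})=(a/b)^{k}$ with $k\in\{-2,0,2\}$. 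In particular the corner count changes by an even amount, and everything reduces to excluding $k=2$.

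Excluding $k=2$ is where the geometry does the real work, and I expect it to be the main obstacle. The structural fact I would exploit is that $\ell_{n-1}=F_{3[6]^{n-1}}(\{x+y=1\})$ is the main anti-diagonal of the $n$-cell $K_{3[6]^{n-1}}$ (since $q_{n-1}=F_{3[6]^{n-1}}(1/2,1/2)$), and that on the main anti-diagonal of a carpet the only subcells whose centers lie on it are the two corner cells $K_3,K_7$; thus the $(n+1)$-cells of $\ell_{n-1}\cap\Omega$ sitting inside the parent cell end in a corner letter, and I would verify directly that the finitely many further $(n+1)$-cells of $\ell_{n-1}\cap\Omega$ lying in the neighbouring $n$-cells share this property, whereas their $R$-images on $\ell_n'$ end in an edge letter. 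This pins the last letter to a corner$\to$edge change (contribution $-1$); together with a separate bound of $+1$ on the change in the first $n$ letters and the parity already established, it forces $k\le 0$. The delicate point throughout is the bookkeeping that rules out the reflection manufacturing two extra corner letters, and I anticipate organizing it as an induction on $n$ in which the level-$n$ configuration of anti-diagonals and cells inside $\Omega$ is compared with a rescaled copy of the level-$(n-1)$ configuration.
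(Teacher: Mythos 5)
Your skeleton is the paper's own: strip the tail beyond level $n+1$ by a symmetry of the square (this part of your argument is fine), then compare the $(n+1)$-prefixes by splitting off the last letter, with the last letter changing from corner type to edge type and the first $n$ letters changing by at most one corner-for-edge swap. But you stop exactly at the step you flag as the main obstacle, and the route you sketch for it --- an induction on $n$, a case-check of ``finitely many further $(n+1)$-cells in neighbouring $n$-cells,'' and a parity argument through an intermediate cell on $\ell_n$ --- is both unexecuted and heavier than necessary. The gap closes in two lines using the observation recorded in the paper right after the lines $\ell_m$ are defined: whenever $\ell_{n-1}$ meets the interior of a cell of word length at least $n$, the centre of that cell lies on $\ell_{n-1}$. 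Since $\ell_{n-1}$ passes through the centre of $K_{w|_{n+1}}$, it meets the interior of the parent $K_{w|_{n}}$, so the parent is \emph{also} centred on $\ell_{n-1}$; hence $K_{w|_{n+1}}$ is a subcell of $K_{w|_n}$ centred on that cell's anti-diagonal, which forces $w_{n+1}\in\{3,7\}$ and $r_{w_{n+1}}=a$. There are no exceptional cells in neighbouring $n$-cells to rule out. A direct computation of the reflection through $\ell_n$ then gives $u_{n+1}=8$ if $w_{n+1}=3$ and $u_{n+1}=2$ if $w_{n+1}=7$, so $r_{u_{n+1}}=b$, and shows that $K_{u|_n}$ shares an edge with $K_{w|_n}$; Proposition~\ref{th2.1} applied to these two $n$-cells yields $g(u|_n)/g(w|_n)\in\{a/b,\,b/a\}$, which is precisely the ``$+1$ bound on the change in the first $n$ letters'' that you assert but never justify. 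The prefix ratio is therefore $\frac{b}{a}\cdot\frac{a}{b}=1$ or $\left(\frac{b}{a}\right)^2\le 1$ since $b\le a$ on $\sigma_2$, and the detour through the intermediate cell $K_v$ and the evenness of $k$ becomes superfluous.

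Two smaller omissions: you treat ``$K_{w|_{n+1}}$ centred on $\ell_{n-1}$ and $K_{u|_{n+1}}$ on $\ell_n'$'' as the only configuration, but the lemma also covers the sub-case where the centre of $K_{w|_{n+1}}$ already lies on the mirror line $\ell_n$ (then the reflection fixes $K_{w|_{n+1}}$, the tail argument alone gives $g(u)=g(w)$) and the mirror-image case of reflection through $\ell_n'$ when the centre lies on $\ell_{n-1}'$; both should at least be dispatched explicitly.
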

\begin{proof}
Assume that $w=w_1\cdots w_k$ and $u=u_1\cdots u_k$, where $k\geq n+1$.
Without loss of generality, we assume that the center of $K_{w|_{n+1}}$ lies in $\ell_{n-1}\cup\ell_n$. If the center of $K_{w|_{n+1}}$ lies on $\ell_n$, then $K_u$ is also a subcell of $K_{w|_{n+1}}$. By the symmetry, we have $r_{u_i}=r_{w_i}$ for all $i\geq n+2$ so that $g(u)= g(w).$

Now we assume that the center of $K_{w|_{n+1}}$ lies on $\ell_{n-1}$. In this case, the center of $K_{w|_n}$ also lies on $\ell_{n-1}$, and $w_{n+1}=3$ or $7$ so that $r_{w_{n+1}}=a$. It is easy to check that $K_{u|_{n}}$ and $K_{w|_{n}}$ share a same line segment so that $\frac{g(u|_n)}{g(w|_n)}$ is either $\frac{a}{b}$ or $\frac{b}{a}$.  Furthermore $u_{n+1}=2$ if $w_{n+1}=7$, and $u_{n+1}=8$ if $w_{n+1}=3$ so that $r_{u_{n+1}}=b$. Thus $\frac{g(u|_{n+1})}{g(w|_{n+1})}$ is either $\frac ba\cdot\frac ab$ or $\frac ba\cdot\frac ba$. By using that $b\leq a$, we have $g(u|_{n+1})\leq g(w|_{n+1}).$ By the symmetry, we have $r_{u_i}=r_{w_i}$ for all $i\geq n+2$ so that $g(u)\leq g(w).$
\end{proof}






Let $\ell$ be the line passing through the point $q=F_3(1/2,1)=F_4(1/2,0)$ with slope $-1$.
Then $\ell\cap \big(F_3(K)\cup F_4(K)\big)$ is a line segment contained in $K$. In fact, let $E=\overline{p_2p_4}\cup\overline{p_4p_6}\cup\overline{p_6p_8}\cup\overline{p_8p_2}$, then $E\subset \bigcup\limits_{i=1}^8 F_i(E)$ and hence $E\subset K$. Thus $\ell\cap \big(F_3(K)\cup F_4(K)\big)=F_3(\overline{p_4 p_6}) \cup F_4(\overline{p_8p_2})\subset K$.

We remark that if $K_w$ is an $(n+1)$-cell centered in $\Omega\cap M_n$, then $K_w$ is centered in $\ell_n$ or $\ell_n'$. Combining this with $\diam(K_w)= 2 d_n$, we know that $K_w$ intersects $\ell$.


\medskip

We introduce two chain operations as follows.

\textbf{$1.$ $n$-reflection.} Let $n\geq1$, one reflects each cell centered in $\Omega\cap (M_{n-1}\setminus M_n)$ with length at least $n+1$ through $\ell_n$ if the center of $K_{w|_{n+1}}$ lies in $\ell_{n-1}\cup\ell_n$, or through $\ell_n^\prime$ otherwise. We remark that from Lemma~\ref{lem-reflection}, given a chain in $\Omega$, with each cell centered in $\Omega\cap M_{n-1}$ and length at least $n+1$, then after doing $n$-reflection, we obtain a new chain in $\Omega$, with each cell centered in $\Omega\cap M_{n}$.

\textbf{$2.$ Smash.} Using the same method as in Lemma~\ref{lem-smash}, one replaces a cell $K_w$ by a finite sequence of its descendants with length at least $|w|+N_0$ and at most $|w|+N_1$ to get a chain. We remark that given a chain $\gamma$ and given some cells in $\gamma$,  we can do smash on these cells to obtain a new chain $\gamma^\prime$.




\medskip

We note that the operations ``smash" and ``reflection" do not increase the total weight of a chain in view of Lemma \ref{lem-smash} and Lemma \ref{lem-reflection}.


\begin{proof}[Proof of the ``if\," part for $\sigma_2$]
Let $(a,b)\in\sigma_2$, then $a+2b\geq1$ and $a\geq b$. Assume that $D_g$ is not a metric. Then by Lemma~\ref{lemma1} and using the similarity, there exist a sequence of chains $\gamma_1^{(n)}$ in $K_{36}$ between $F_{36}(0,1)$ and $F_{36}(1,0)$, and a sequence of chains $\gamma_2^{(n)}$ in $K_{42}$ between $F_{42}(0,0)$ and $F_{42}(0,1)$, such that $\lim_{n\to \infty} g(\gamma_i^{(n)})=0$ for $i=1,2$. From $F_{36}(0,1)=F_{42}(0,0)$, we know that $\gamma^{(n)}=\gamma_1^{(n)}\cup\gamma_2^{(n)}$ is a sequence of chains in $K_{36}\cup K_{42}$ between $F_{36}(1,0)$ and $F_{42}(0,1)$ satisfying $\lim_{n\to \infty} g(\gamma^{(n)})=0$.

Given a chain $\gamma_0$ contained in $K_{36}\cup K_{42}$, between $F_{36}(1,0)$ and $F_{42}(0,1)$, let
\begin{equation*}
k=\max\{|w|:w\in\gamma_0\},
\end{equation*}
and $n_0=\max\{k-N_1-1,0\}$, where $N_1$ is defined as in Lemma \ref{lem-smash}. From the assumption that $\cup\gamma_0\subset K_{36}\cup K_{42}$, we know that $\min\{|w|:w\in\gamma_0\}\geq 2$.

Now we do the following $n_0+1$ steps of operations on $\gamma_0$ to get a new chain $\widetilde{\gamma_0}$.

From step $1$ to step $n_0$, we do the following (if $n_0=0$, we simply skip this and do step $n_0+1$):

{\bf Step $i$ $(1\leq i\leq n_0)$.}\;
First, we do $i$-reflection on $\gamma_{i-1}$. Since each cell in $\gamma_{i-1}$ is centered in $\Omega\cap M_{i-1}$ with length at least $i+1$, we obtain a new chain $\gamma_{i-1}^\prime$, such that each cell in $\gamma_{i-1}^\prime$ is centered in $\Omega\cap M_{i}$. Then, we do smash on each word $w$ in $\gamma_{i-1}^\prime$ with length $i+1$, to obtain a new chain $\gamma_{i-1}^{\prime\prime}$. Notice that each cell in $\gamma_{i-1}^{\prime\prime}$ has length at least $i+2$, and the center of each cell lies in $\Omega\cap M_{i-1}$. Thus, doing $i$-reflection again, we obtain a new chain $\gamma_i$ with the following properties:

\emph{$g(\gamma_i)\leq g(\gamma_{i-1})$, each cell in $\gamma_i$ has length at least $i+2$ and at most $\max\{k,i+1+N_1\}$, and the center of each cell lies in $M_{i}$.}

We remark that in Step~$1$, we can directly do smash on $\gamma_0$ and then do $1$-reflection to obtain the new chain $\gamma_1$.

\medskip

From $n_0+1+N_1\geq k$, we know that each cell in $\gamma_{n_0}$ has length at least $n_0+2$ and at most $n_0+N_1+1$.

\medskip

At {\bf  step $n_0+1$}, we first do $(n_0+1)$-reflection to obtain a new chain $\gamma_{n_0}^\prime$. It is clear that each cell in $\gamma_{n_0}^\prime$ is centered in $\Omega\cap M_{n_0+1}$, with length at least $n_0+2$ and at most $n_0+N_1+1$. Now, we replace each cell $K_w$ in $\gamma_{n_0}^\prime$ by $K_{w|_{n_0+2}}$, to obtain a new chain $\gamma_{n_0+1}$. Then all cells in $\gamma_{n_0+1}$ are centered in $\Omega\cap M_{n_0+1}$ with length $n_0+2$  so that they intersect $\ell$. For each cell $K_w$ in $\gamma_{n_0}^\prime$, we have $g(w|_{n_0+2})\leq b^{-(N_1-1)}g(w)$ so that
\begin{equation*}
  g(\gamma_{n_0+1}) \leq b^{-(N_1-1)}g(\gamma_{n_0})\leq b^{-(N_1-1)}g(\gamma_0).
\end{equation*}

Let $\widetilde{\gamma}_0=\gamma_{n_0+1}$. Then $g(\widetilde{\gamma}_0)\leq b^{-(N_1-1)}g(\gamma_0)$ and each cell in $\widetilde{\gamma}_0$ intersects $\ell$.

Hence, let $n\geq 1$, for each chain $\gamma^{(n)}$, we obtain a new chain $\widetilde\gamma^{(n)}$ satisfying the two conditions below:

\begin{enumerate}
  \item[1.] each cell in $\widetilde\gamma^{(n)}$ intersects $\ell$;
  \item[2.]  $g(\widetilde\gamma^{(n)})\rightarrow0$ as $n\rightarrow\infty$.
\end{enumerate}

Finally, by the similarity and using  Lemma~\ref{lemma4}, we see that $g(\widetilde\gamma^{(n)})\geq (ab+b^2)\cdot\frac b{2a+b}$ for all $n\geq1$, a  contradiction, which implies that $D_g$ is a metric.
\end{proof}

\subsection{The case for $\sigma_1$.}\label{sec2.2}
Now we deal with the case $(a,b)\in \sigma_1$. In this case, $2a+b\geq1$ and $a\leq b$. We will use a similar trick as in the case $(a,b)\in  \sigma_2$.

For $n\geq 1$, we define $L_n$ and $L_n^\prime$ to be the line $y=\frac1{2\cdot3^{n-1}}$ and the line $y=\frac1{3^n}$, respectively. Then $L_n$ passes through $P_n$ and $L_n^\prime$ passes through $P_n^\prime$, where $P_n=F_{[1]^{n-1}}(0,1/2)$ and $P_n^\prime=F_{[1]^{n-1}}(0,1/3)$. Set $D_0^\prime$ to be the unit square $[0,1]^2$. For $n\geq 1$, we denote by $D_n$ (or $D_n^\prime$) the rectangle enclosed by $x$-axis, $y$-axis, and lines $x=1$ and $L_n$ (or $L_n^\prime$).

We state without proof the following lemma which is analogous to Lemma \ref{lem-reflection} in $\sigma_2$ case.
\begin{lemma}\label{lemma23}
Let $n\geq1$, for any cell $K_w$ centered in $D'_{n-1}\setminus D_{n}$ with length at least $n$, let $K_u$ be the reflected cell of $K_w$ along $L_n$, then
$K_u$ is centered in $D_n$ and
\begin{equation*}
g(u)=g(w);
\end{equation*}
 for any cell $K_w$ centered in $D_n\setminus D'_{n}$ with length at least $n$, let $K_u$ be the reflected cell of $K_w$ through $L'_n$, then $K_u$ is centered in $D'_{n}$ and
\begin{equation*}
g(u)=(a/b)\cdot g(w)\leq g(w).
\end{equation*}
\end{lemma}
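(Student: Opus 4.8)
The plan is to mirror the proof of Lemma~\ref{lem-reflection}, reducing each reflection to a symmetry of the carpet in suitable local coordinates and then reading off the effect on the weight at the word level. The first observation I would record is a containment fact: if $K_w$ is centered in $D_{n-1}'\setminus D_n$ with $|w|\geq n$, then its center has $y$-coordinate in $[\tfrac12 3^{-(n-1)},3^{-(n-1)}]\subset[0,3^{-(n-1)}]$, and since $K_w\subset K_{w|_{n-1}}$, the $(n-1)$-cell $K_{w|_{n-1}}$ must be a cell of the bottom row, i.e. it occupies $[x_0,x_0+3^{-(n-1)}]\times[0,3^{-(n-1)}]$; likewise in the second case $K_{w|_n}$ is a middle-row $n$-subcell of such a bottom-row $(n-1)$-cell, so $w_n\in\{4,8\}$ and $r_{w_n}=b$. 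The second preliminary I would isolate is the behaviour of the vertical flip $V(x,y)=(x,1-y)$, which is a symmetry of $K$ inducing $1\leftrightarrow 7$, $3\leftrightarrow 5$, $2\leftrightarrow 6$ and fixing $4,8$; in particular $V\circ F_j=F_{\overline{j}}\circ V$ where $r_{\overline{j}}=r_j$ for every $j$, so $V$ preserves all weights.

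For the first assertion I would rescale so that $K_{w|_{n-1}}=[0,1]^2$. Then $L_n$ is exactly the horizontal center line $y=1/2$ of this $(n-1)$-cell, so reflection across $L_n$ equals $F_{w|_{n-1}}\circ V\circ F_{w|_{n-1}}^{-1}$. Applying this to $K_w=F_{w|_{n-1}}F_{w_n\cdots w_k}(K)$ and using $V\circ F_{w_n\cdots w_k}=F_{\overline{w_n}\cdots\overline{w_k}}\circ V$ together with $V(K)=K$, the reflected cell is $K_u$ with $u=w|_{n-1}\,\overline{w_n}\cdots\overline{w_k}$. Its center lies in $D_n$, and since every letter from position $n$ onward is only flipped by $V$, which preserves $r$, one gets $g(u)=g(w)$.

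For the second assertion the reflection across $L_n'$ is, in the rescaled coordinates, reflection across the line $y=1/3$, which sends the middle-row subcell to the bottom-row subcell directly below it. The key computation is the pair of relations $\psi\circ F_8=F_1\circ V$ and $\psi\circ F_4=F_3\circ V$ (where $\psi(x,y)=(x,\tfrac23-y)$), from which the reflected word is
\begin{equation*}
u=w|_{n-1}\,\iota(w_n)\,\overline{w_{n+1}}\cdots\overline{w_k},\qquad \iota(8)=1,\ \iota(4)=3.
\end{equation*}
Here the single letter at position $n$ changes from a $b$-letter to an $a$-letter, contributing the factor $a/b$, while positions $1,\dots,n-1$ are untouched and positions $>n$ are only $V$-flipped and hence weight-preserving, giving $g(u)=(a/b)\,g(w)$; the inequality $g(u)\leq g(w)$ then follows from $a\leq b$ in $\sigma_1$. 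The centers map into $D_n'$ as required. I expect the main obstacle to be the careful bookkeeping rather than any deep idea: verifying the containment of $K_w$ in a bottom-row $(n-1)$-cell, pinning down that $K_u$ is a genuine cell of $K$ (which is where the vertical symmetry of $K$ and the commutation $\psi\circ F_j=F_{\iota(j)}\circ V$ do the real work), and checking the degenerate boundary cases where a center lies exactly on $L_n$ or $L_n'$.
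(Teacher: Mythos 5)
Your proof is correct; the paper in fact states this lemma without proof, deferring to the analogy with Lemma~\ref{lem-reflection}, and your argument is precisely the expected analogue of that proof: locate $K_{w|_{n-1}}$ as a bottom-row $(n-1)$-cell, conjugate the reflection to the vertical flip $V$ (which permutes the letters weight-preservingly), and observe that only the letter at position $n$ can change, from a $b$-letter in $\{4,8\}$ to an $a$-letter in $\{1,3\}$ in the $L_n'$ case. The only nitpick is that the center's $y$-coordinate lies in the half-open interval $\bigl(\tfrac12 3^{-(n-1)},3^{-(n-1)}\bigr]$ rather than the closed one you wrote, and is in fact strictly below $3^{-(n-1)}$ because the center of $K_w$ lies in the open square $\mathrm{int}\,F_{w|_{n-1}}([0,1]^2)$ --- a fact you implicitly use to rule out the degenerate boundary cases and which is harmless.
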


\medskip

Then we will use chain operations to deal with the  $\sigma_1$ case as we did in $\sigma_2$ case. According to the new situation, we need the reflections in the following.

\textbf{$3.$ $L_n$-reflection.} Let $n\geq1$, one reflects each cell centered in $D'_{n-1}\setminus D_{n}$ with word length at least $n$ through $L_n$ from above to below.

\textbf{$4.$ $L'_n$-reflection.} Let $n\geq1$, one reflects each cell centered in $D_n\setminus D'_{n}$ with word length at least $n$ through $L_n'$ from above to below.

\medskip

In view of Lemma \ref{lemma23}, the operations $L_n$-reflection and $L_n^\prime$-reflection do not increase the total weight of a chain.
\begin{proof}[Proof of the ``if\," part for $\sigma_1$]
Let $(a,b)\in\sigma_1$. Then $2a+b\geq1$ and $a\leq b$. We will show that $D_g(p_1,p_3)\geq1$. Then in view of Lemma \ref{lemma1}, we will conclude that $D_g$ is a metric on $K$.

\bigskip

Given a chain $\gamma$ between $p_1$ and $p_3$. Denote $\gamma_0=\gamma$ and let
\begin{equation*}
k=\max\{|w|:w\in\gamma\}.
\end{equation*}
We do the following $k$ steps on $\gamma$ to get a new chain $\widetilde{\gamma}$.

{\bf Step $i$ ($1\leq i\leq k$).}\;
We do the $L_i$-reflection on $\gamma_{i-1}$ to get a new chain $\gamma_{i-1}^\prime$. Then we do the $L'_i$-reflection on $\gamma_{i-1}^\prime$ to obtain a new chain $\gamma_i$ with the following properties:

\emph{ $g(\gamma_i)\leq g(\gamma_{i-1})$, each cell in $\gamma_i$ with word length at most $i$ intersect the line $\overline{p_1p_3}$; and each cell with word length at least $i$ is centered in $D'_i$}.

\medskip

We denote $\gamma_k$ by $\widetilde{\gamma}$. From Lemma~\ref{lemma23}, we have
\begin{equation*}
g(\widetilde\gamma)\leq g(\gamma).
\end{equation*}
On the other hand, from Lemma~\ref{lemma24}, we have
\begin{equation*}
g(\widetilde\gamma)\geq1.
\end{equation*}
Hence we have $g(\gamma)\geq1$. Since this does not depend on the choice of $\gamma$, we conclude that
\begin{equation}\label{eq:Dgp1p3}
  D_g(p_1,p_3)\geq1.
\end{equation}
 Hence from Lemma~\ref{lemma1}, $D_g$ is a metric on $K$.
\end{proof}

\section{Application to sub-Gaussian heat kernel estimates}\label{sec3}
In this section, we are concerned with how we can use the metrics constructed to obtain the sub-Gaussian heat kernel estimates of the standard diffusion on the Sierpinski carpet \cite{BB92,KZ} by using time-changing via self-similar measures. We first study the properties of the metrics as constructed.

\subsection{adaptedness and quasisymmetry}\label{sec3.1}
As in the previous section, we still use $g$ and $D_g$ to represent $g_{a,b}$ and $D_{g_{a,b}}$ respectively.
Given a chain $\gamma=\big(w(1),\ldots,w(k)\big)$, we define $k$ to be the \textit{length} of the chain $\gamma$. Given $x,y\in K$, we denote by $\CH_k(x,y)$ to be the set of all chains between $x$ and $y$ with length $k$.

Let $M$ be a nonnegative integer. For any $p,q\in K$, we define
\begin{equation*}
  D_{g;M}(p,q)=\inf\left\{ g(\gamma)\Big|\; \gamma\in \CH_k(p,q) \textrm{ with } 1\leq k\leq M+1 \right\}.
\end{equation*}
From the definition, it is clear that $D_g(p,q)\leq D_{g;M}(p,q)$ for all $p,q\in K$. We say that $D_g$ is \emph{$M$-adapted} to $g$ if there exists a constant $C_*>0$ such that for all $p,q\in K$,
\begin{equation*}
   D_{g;M}(p,q) \leq C_* D_g(p,q).
\end{equation*}
$D_g$ is said to be \emph{adapted} to $g$ if $D_g$ is $M$-adapted to $g$ for some $M\geq 0$. It is clear that if $D_g$ is adapted to $g$, then $D_g$ is a metric. We remark that by Proposition~6.3 in \cite{Kig19}, the definitions of ``$D_g$ is $M$-adapted to $g$" and ``$D_g$ is adapted to $g$" in our setting are equivalent to the original definitions in \cite{Kig19}.

We first give a lemma to show that the metric $D_g$ constructed in Section \ref{sec2} has a lower bound for specific situations.
\begin{lemma}\label{lem-adapt}
  For $(a,b)\in \sigma$, there exists $C_0=C_0(a,b)>0$ such that
     \begin{equation*}
        \inf\{D_g(p,q):\, p,q\in K, \textrm{ and }|x_p-x_q|=1 \textrm{ or } |y_p-y_q|=1\}\geq C_0.
     \end{equation*}
\end{lemma}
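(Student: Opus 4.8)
The goal is to bound $D_g(p,q)$ from below when $p$ and $q$ lie on opposite sides of the unit square, say $|x_p - x_q| = 1$, meaning $x_p = 0$ and $x_q = 1$ (the $y$-case is symmetric). My plan is to reduce this to the one-dimensional ``three to one'' estimates already established in Lemma~\ref{lemma24} (for $\sigma_1$) and Lemma~\ref{lemma4} (for $\sigma_2$), by replacing an arbitrary chain $\gamma$ between $p$ and $q$ by a chain whose cells all meet a fixed straight line crossing $K$ from left to right, at a controlled cost to the total weight.

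\textbf{The approach.} First I would invoke Theorem~\ref{th2.2}: since $(a,b) \in \sigma$, $D_g$ is already a metric, so $D_g(p,q) > 0$ for each fixed pair. The content of the lemma is that this lower bound can be taken \emph{uniform} over all admissible pairs. Let $\gamma = (w(1), \ldots, w(m))$ be any chain between $p$ and $q$ with $x_p = 0$, $x_q = 1$. The union $\cup\gamma$ is a connected set whose $x$-projection is all of $[0,1]$, so $\gamma$ must ``cross'' the carpet horizontally. The key is to use the chain-operation machinery from Section~\ref{sec2}: by applying the reflection operations ($L_n$- and $L_n'$-reflections for $\sigma_1$, or $n$-reflections for $\sigma_2$) together with smashing, I can fold $\gamma$ into a chain $\widetilde\gamma$ all of whose cells intersect a single fixed line segment running across $K$, while controlling $g(\widetilde\gamma) \le C\, g(\gamma)$ for a constant $C = C(a,b)$ independent of $\gamma$ (as in the proofs of the ``if'' parts, where one picks up at most a factor $b^{-(N_1-1)}$).

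\textbf{Key steps in order.} (i) Fix the target line $\ell$ across $K$ and observe that $\cup\gamma$ separates the two horizontal sides, forcing a sub-chain that projects onto a full unit interval in the appropriate direction. (ii) Apply the sequence of reflection-and-smash operations developed in Section~\ref{sec2} to produce $\widetilde\gamma$ with every cell meeting $\ell$, recording that each operation does not increase total weight except for one bounded smashing factor, so $g(\widetilde\gamma) \le C\, g(\gamma)$. (iii) Apply Lemma~\ref{lemma24} or Lemma~\ref{lemma4} (possibly rescaled to a subcell by self-similarity, as in the $\sigma_2$ proof where the factor $(ab+b^2)$ appears) to conclude $g(\widetilde\gamma) \ge c_0$ for an explicit positive constant $c_0 = c_0(a,b)$. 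Combining, $g(\gamma) \ge c_0 / C =: C_0$, and taking the infimum over all chains gives $D_g(p,q) \ge C_0$ uniformly.

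\textbf{Main obstacle.} The delicate point is the uniformity: the constant $C$ in $g(\widetilde\gamma) \le C\, g(\gamma)$ must not depend on $p$, $q$, or $\gamma$, only on $(a,b)$. This requires checking that the reflection operations fold the crossing sub-chain onto $\ell$ using a number of distinct smashing passes whose cumulative cost stays bounded --- exactly the bookkeeping carried out in the ``if'' part proofs, where the single factor $b^{-(N_1-1)}$ suffices because the remaining operations are weight-nonincreasing by Lemmas~\ref{lem-reflection} and~\ref{lemma23}. A secondary subtlety is that $p$ and $q$ need not be corner points, so one cannot simply quote $D_g(p_1,p_3) \ge 1$ from \eqref{eq:Dgp1p3}; instead one applies the lemmas to whichever crossing line $\ell$ the folded chain lands on, which is why I phrase the lower bound abstractly via the crossing structure rather than fixing particular endpoints.
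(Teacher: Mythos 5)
Your opening observation --- that Theorem~\ref{th2.2} already gives $D_g(p,q)>0$ for each fixed pair and the only issue is uniformity --- is exactly right, but the paper settles the uniformity by pure compactness rather than by re-running the chain machinery. Since $D_g$ is continuous with respect to the Euclidean metric (noted in the introduction; indeed $|D_g(p,q)-D_g(p',q')|\leq D_g(p,p')+D_g(q,q')$) and, by Theorem~\ref{th2.2}, strictly positive off the diagonal, and since $K\cap(\{0\}\times[0,1])$ and $K\cap(\{1\}\times[0,1])$ are disjoint compact sets, the infimum of $D_g$ over their product is attained and hence positive; symmetry handles the remaining configurations. That is the entire proof. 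The lemma asks only for existence of $C_0(a,b)>0$, not an explicit value, so the quantitative folding argument is unnecessary overhead.

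More importantly, step (ii) of your plan has a genuine gap in the $\sigma_2$ case. Folding onto a horizontal line via the $L_n$- and $L_n'$-reflections is weight-nonincreasing only because the $L_n'$-reflection in Lemma~\ref{lemma23} costs a factor $a/b$, which is $\leq 1$ precisely when $a\leq b$; in $\sigma_2$ these factors exceed $1$ and accumulate over infinitely many levels, so an arbitrary left-to-right crossing cannot be folded onto $\overline{p_1p_3}$ at bounded cost. The $\sigma_2$ machinery of Section~\ref{sec2.1} instead folds onto a diagonal line of slope $-1$ inside the small hexagon $\Omega$ and is tailored to one specific crossing (between $F_{36}(1,0)$ and $F_{42}(0,1)$ inside $K_{36}\cup K_{42}$); an arbitrary chain joining the left and right edges of $K$ need not cross any full diagonal segment of the inscribed diamond $E$ --- it can hug the bottom edge and touch $E$ only at $p_2$ --- so there is no evident target line $\ell$ for which Lemma~\ref{lemma4} applies after folding. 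Patching your route in $\sigma_2$ would require new reflection lines and a new crossing lemma, none of which is needed once one notices the compactness argument.
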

\begin{proof}
By the symmetry, it suffices to show that for $(a,b)\in \sigma$,
\begin{equation*}
  \inf\{D_g(p,q):\, p\in \{0\}\times [0,1], q\in \{1\}\times [0,1]\}>0.
\end{equation*}
It is clear true since $D_g$ is continuous with respect to the Euclidean metric, and $\{0\}\times [0,1]$ and  $\{1\}\times [0,1]$ are two disjoint compact subsets of $K$.
\end{proof}

We remark that by definition, $C_0(a,b)\leq 1$ for all $(a,b)\in \sigma$.

In the following, for a given chain $\gamma$ in $K$, if all cells in $\gamma$ are contained in $K_w$ for some $w\in\Sigma^*$, then we say $\gamma$ is \textit{inside} $K_w$, and denote by
$F_w^{-1}(\gamma)$ the chain obtained by changing each cell $K_v$ in $\gamma$ into $F_w^{-1}(K_v)$.

\begin{theorem}\label{thm-adapt}
  $D_{g}$ is $1$-adapted to $g$ for all $(a,b)\in \sigma$.
\end{theorem}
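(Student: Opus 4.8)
The plan is to prove the two–sided comparison $D_g(p,q)\le D_{g;1}(p,q)\le C_* D_g(p,q)$ by attaching to each pair $p\neq q$ a natural \emph{separation scale} and matching the length-$\le 2$ chain available at that scale against a lower bound for $D_g$. The left inequality is automatic from the definitions, so everything reduces to producing the constant $C_*$ for the right one.

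First I would fix the scale. For distinct $p,q$, let $n=n(p,q)$ be the largest integer for which there exist $n$-cells $K_w\ni p$ and $K_v\ni q$ (possibly $w=v$) with $K_w\cap K_v\neq\emptyset$; this is finite since $\diam K_w\to 0$. The pair $(w,v)$, or the single word $w$ when $w=v$, is a chain between $p$ and $q$ of length at most $2$, so
\[
   D_{g;1}(p,q)\le g(w)+g(v).
\]
Two intersecting $n$-cells have comparable weights: if they meet along a segment this is Proposition~\ref{th2.1}, and the only other possibility is that they meet in a single corner, for which a direct finite check gives a uniform bound $g(w)/g(v)\le c_1=c_1(a,b)$. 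Hence $D_{g;1}(p,q)\le (1+c_1)\,g(w)$, and the theorem follows once I establish the matching lower bound $D_g(p,q)\ge c_2\,g(w)$ with $c_2=c_2(a,b)>0$.

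The lower bound is the heart of the matter, and here I would exploit the maximality of $n$: the points $p,q$ lie in \emph{non}-intersecting $(n+1)$-cells, so (disjoint $(n+1)$-cells being separated by at least the width of a level-$(n+1)$ gap) $|p-q|\ge c_3\,3^{-n}$, while both points lie in the neighbourhood $N(K_w)$ consisting of $K_w$ together with its finitely many adjacent $n$-cells. Thus $p$ and $q$ are at least a full cell-diameter apart inside a bounded cluster of $n$-cells. I would then reduce to a rescaled crossing estimate: after applying $F_{w'}^{-1}$ to a suitable ancestor, the claim becomes that a chain joining two points of $K$ that lie in distinct adjacent level-$1$ cells and are separated by a definite Euclidean distance has weight bounded below, which is precisely the content of Lemma~\ref{lem-adapt} together with the ``three-to-one'' crossing estimates (Lemmas~\ref{lemma4} and~\ref{lemma24}). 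Self-similarity converts each full crossing of an $n$-cell $K_{w^*}\subset N(K_w)$ into a cost $\ge C_0\,g(w^*)\asymp g(w)$.

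The genuine obstacle is to show that \emph{every} chain from $p$ to $q$ must actually realise a full horizontal or vertical crossing of some $n$-cell $K_{w^*}$ of weight comparable to $g(w)$ — in particular, to rule out cheap chains that leave the cluster $N(K_w)$ and return, or that weave across a separating grid line through small sub-cells without ever crossing a single $n$-cell outright. Because $K$ contains all horizontal and vertical grid segments $x=k3^{-m}$ and $y=k3^{-m}$, I would separate $p$ from $q$ by a grid line of generation $n+O(1)$ lying strictly between them; connectedness of $\cup\gamma$ then forces the chain to cross the adjacent column, and I would bound the crossing weight from below by applying the rescaled Lemma~\ref{lem-adapt} to the $n$-cell of that column which the crossing must traverse. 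This is exactly the step where the hypotheses $2a+b\ge 1$ and $a+2b\ge 1$, which make full crossings expensive, enter. Chains that exit $N(K_w)$ can be truncated or folded back using the reflection operations of Section~\ref{sec2} without increasing weight, reducing to the confined case. Combining the two bounds yields $D_{g;1}(p,q)\le \tfrac{1+c_1}{c_2}\,D_g(p,q)$, so $D_g$ is $1$-adapted with $C_*=(1+c_1)/c_2$.
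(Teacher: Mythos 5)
Your overall strategy is the paper's: a length-two chain at the separation scale gives the upper bound on $D_{g;1}$, and the matching lower bound on $D_g$ is meant to come from confining an arbitrary chain near $p$, folding it into a single cell, and invoking the rescaled compactness estimate of Lemma~\ref{lem-adapt}. The upper bound and the choice of scale are fine. The gap is in the two sentences that are supposed to close the lower bound. First, reflections across an arbitrary shared edge do \emph{not} preserve or decrease total weight: by Proposition~\ref{th2.1} each such reflection multiplies the weight by $a/b$ or $b/a$, one of which exceeds $1$; the weight-non-increasing reflections of Section~\ref{sec2} are special to the lines $\ell_n,\ell_n',L_n,L_n'$ and to the sign of $a-b$, and are not available here. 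What saves the argument is that one only needs a \emph{bounded number} of reflections per cell, and arranging this is precisely the role of the device you are missing: the paper stops the chain at its \emph{first} cell meeting the boundary of the $3\times 3$ block $S$ of squares, at the separation scale, surrounding the cell $K_u\ni p$; the truncated chain then lies entirely in $S$, so each cell is folded into $K_u$ by at most two reflections and the total weight changes by at most the fixed factor $c^2$.

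Second, the claim that the chain ``must traverse'' an $n$-cell of the separating column whose weight is comparable to $g(w)$ is unjustified, and it is the crux. A column of generation $n+O(1)$ contains $\asymp 3^{n}$ stacked cells whose weights range over a factor $\bigl(\max\{a,b\}/\min\{a,b\}\bigr)^{n}$, and an unconfined chain may cross the column far from $p$, smeared over many of these cells without fully traversing any single one; neither the identity of the cells it passes through nor the cost of wandering there is controlled by your argument. The confinement must therefore come \emph{before} the crossing estimate, not after: once the chain is truncated at its first exit from $S$ and folded, it crosses $K_u$ itself from one edge to the opposite edge, so Lemma~\ref{lem-adapt} applies after rescaling by $F_u^{-1}$ and yields $g(\gamma)\geq c^{-2}C_0\,g(u)$ directly. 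Two smaller points: Lemma~\ref{lem-adapt} as stated requires the two points to have a coordinate difference exactly $1$ (opposite edges after rescaling), not merely a ``definite Euclidean distance''; and the three-to-one Lemmas~\ref{lemma4} and~\ref{lemma24} are not needed here at all --- Lemma~\ref{lem-adapt} is pure compactness, resting only on the fact (Theorem~\ref{th2.2}) that $D_g$ is already a metric.
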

\begin{proof}
Fix $p\not=q\in K$. Without loss of generality, we may assume that $|x_p-x_q|\geq |y_p-y_q|$. Let $m\in \mathbb{Z}^+$ satisfying $3^{-m}< |x_p-x_q|\leq 3^{-m+1}$. Then there exist two $(m+1)$-cells $u,u^\prime$, such that $p\in K_{u}$, $q\in K_{u^\prime}$ and $K_{u|_{m-1}}\cap K_{u^\prime|_{m-1}}\not=\emptyset$. Thus $$D_{g;1}(p,q)\leq g(u|_{m-1})+g(u^\prime|_{m-1})\leq (1+c^2)g(u|_{m-1})\leq (1+c^2)g(u)\max\{a^{-2},b^{-2}\},$$
where $c=\max\{a/b,b/a\}$.

Let $S$ be the square with center $F_u(1/2,1/2)$ and edge length equals $3^{-m}$. That is, $S$ is the union of the square $F_{u}([0,1]^2)$ and its eight neighboring squares with the same edge length. By $|x_p-x_q|>3^{-m}$, we know that $q$ does not lie in $S$.



Given a chain $\gamma=\big(w(1),\ldots,w(n)\big)$ between $p$ and $q$, we define
\begin{equation*}
  \ell=\min\{1\leq i\leq n:\, K_{w(i)} \textrm{ intersects the boundary of the square } S\}.
\end{equation*}

If $|w(\ell)|\geq  m+1$, then all cells in the chain $\gamma_1=\big(w(1),\ldots,w(\ell)\big)$ are contained in the square $S$. By reflecting all cells in $\gamma_1$ outside $K_u$ through the lines passing one of the four edges of the square $F_u([0,1]^2)$, we could always obtain a new chain $\gamma^*$ inside $K_u$ connecting two points $p^*$ and $q^*$, with $|x_{p^*}-x_{q^*}|=3^{-m-1}$, or $|y_{p^*}-y_{q^*}|=3^{-m-1}$. By the self-similarity and using Lemma~\ref{lem-adapt}, we have
$$g(\gamma^*)=g(u)g\big(F_u^{-1}(\gamma^*)\big)\geq g(u)C_0(a,b)$$
so that
\begin{equation}\label{eq:adapt}
  g(\gamma)\geq c^{-2} g(\gamma^*)\geq c^{-2}C_0(a,b)g(u).
\end{equation}

If $|w(\ell)|< m+1$, then there exists a subcell $v$ of $w(\ell)$ with $|v|=m+1$ such that either $v=u$ or $F_{v}(K)\cap F_u(K)$ is a point or a line segment. Thus,
\begin{equation*}
  g(\gamma)\geq g(v)\geq c^{-2}g(u)\geq c^{-2}C_0(a,b)g(u).
\end{equation*}

Let $C_*=c_1/c_2$, where $c_1=(1+c^2)\max\{a^{-2},b^{-2}\}$ and $c_2=c^{-2}C_0(a,b)$. Combining the above arguments, we have $D_{g;1}(p,q)\leq C_* g(\gamma)$ for every chain $\gamma$ between $p$ and $q$. It follows that $D_{g;1}(p,q)\leq C_* D_g(p,q)$. By the arbitrariness of $p$ and $q$, we know that the theorem holds.
\end{proof}

\bigskip

When $D_g$ is a metric on $K$, for a subset $E$ of $K$, we denote the diameter of $E$ under $D_g$ by
\begin{equation*}
\text{diam}_g(E)=\sup\{D_g(x,y):x,y\in E\}.
\end{equation*}

By using Theorem~6.4 in \cite{Kig19}, we can obtain the following result. We present a proof here for completeness.

\begin{corollary}\label{cor-adapt}
For $(a,b)\in \sigma$, $\diam_g(K_w)\asymp g(w)$ for all $w\in \Sigma^*$.
\end{corollary}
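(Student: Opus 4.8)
The goal is to establish the two-sided estimate $\diam_g(K_w) \asymp g(w)$ for every word $w$, where the implicit constants do not depend on $w$. The plan is to prove the two inequalities separately, exploiting the self-similarity of both $g$ and the metric $D_g$, together with the $1$-adaptedness established in Theorem~\ref{thm-adapt}.

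\emph{Upper bound.} First I would show $\diam_g(K_w) \leq C\, g(w)$. The key observation is self-similarity: for any chain $\gamma$ inside $K_w$ between two points of $K_w$, we have $g(\gamma) = g(w)\, g(F_w^{-1}(\gamma))$, so that
\begin{equation*}
  \diam_g(K_w) = g(w)\, \diam_g(K).
\end{equation*}
Since $K$ is compact and $D_g$ is a metric (by Theorem~\ref{th2.2}), $\diam_g(K)$ is a finite positive constant, which gives the upper bound with $C = \diam_g(K)$. The only subtlety is justifying that an optimal (or near-optimal) chain realizing $D_g(x,y)$ for $x,y \in K_w$ can be taken \emph{inside} $K_w$ up to a bounded multiplicative error; this is exactly where adaptedness enters, since $1$-adaptedness controls how far one must search. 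In fact, for the upper bound one can more directly bound $\diam_g(K_w)$ by the infimum over chains inside $K_w$, using that any two points of $K_w$ are connected by chains within $K_w$, and rescale by $g(w)$.

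\emph{Lower bound.} The harder direction is $\diam_g(K_w) \geq c\, g(w)$. The natural approach is to pick two points in $K_w$ whose $D_g$-distance is comparable to $g(w)$. I would choose two points $p,q \in K_w$ that are images under $F_w$ of a pair realizing unit Euclidean displacement, e.g.\ $p = F_w(x_0)$, $q = F_w(y_0)$ with $|x_{x_0} - x_{y_0}| = 1$. Then Lemma~\ref{lem-adapt} gives $D_g$ of the corresponding ``unrescaled'' pair a positive lower bound $C_0(a,b)$; the issue is that distances inside $K_w$ measured by $D_g$ need not rescale cleanly, because an optimal chain between $p$ and $q$ in $K$ might wander outside $K_w$. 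This is where Theorem~\ref{thm-adapt} ($1$-adaptedness) is essential: it guarantees that $D_g(p,q)$ is comparable to the restricted distance $D_{g;1}(p,q)$, and the argument in the proof of Theorem~\ref{thm-adapt} (reflecting a chain back inside $K_u$ and applying Lemma~\ref{lem-adapt}) shows precisely that any chain between such $p,q$ has weight at least $c^{-2} C_0(a,b)\, g(w)$.

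\emph{Main obstacle.} The crux is the lower bound, and specifically the interplay between optimal chains and the cell $K_w$: a priori the geodesic between two points of $K_w$ may exit $K_w$, so one cannot naively rescale. The reflection-and-adaptedness machinery from Theorem~\ref{thm-adapt} resolves exactly this difficulty, which is why I would invoke it (and hence Lemma~\ref{lem-adapt}) rather than trying to argue by bare self-similarity. Concretely, I expect the proof to amount to selecting an appropriate pair $p,q \in K_w$ with $|x_p - x_q| \asymp g(w)/g(w)$ after rescaling — i.e.\ a pair that is a full edge of the cell $K_w$ — and then quoting the estimate $D_g(p,q) \geq c^{-2} C_0(a,b)\, g(w)$ obtained in the same way as inequality~\eqref{eq:adapt}. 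Combining both bounds yields the stated comparability $\diam_g(K_w) \asymp g(w)$.
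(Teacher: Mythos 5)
Your proposal is correct and takes essentially the same route as the paper: the upper bound is just the one-cell chain $(w)$ (note your displayed identity should only be the inequality $\diam_g(K_w)\le g(w)\,\diam_g(K)\le g(w)$, since a chain realizing $D_g(x,y)$ for $x,y\in K_w$ need not stay inside $K_w$), and the lower bound picks a diametrically placed pair in $K_w$ and appeals to $1$-adaptedness. The paper's only (cosmetic) difference is that it takes the opposite corners $p=F_w(0,0)$, $q=F_w(1,1)$, bounds $D_{g;1}(p,q)\ge g(w)$ by a direct containment argument for chains of length at most two, and then quotes Theorem~\ref{thm-adapt} as a black box, rather than rerunning the reflection argument of Lemma~\ref{lem-adapt} for the chosen pair.
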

\begin{proof}
  Given $w\in \Sigma^*$, arbitrarily pick $p,q\in K_w$. Then $\gamma=(w)$ is chain between $p$ and $q$ with length $1$ so that $D_g(p,q)\leq g(w)$. This implies that $\diam_g(K_w)\leq g(w)$.

  On the other hand, we consider $p=F_w(0,0)$ and $q=F_w(1,1)$. For any chain $\gamma=(u,v)\in \CH_1(p,q)$, we can see that either $K_w\subset K_u$ or $K_w\subset K_v$ so that $g(\gamma) \geq g(w).$
  Thus $D_{g;1}(p,q)\geq g(w)$. Combining this with Theorem~\ref{thm-adapt}, we have
  \begin{equation*}
    \diam_g(K_w)\geq D_g(p,q)\geq C_*^{-1}D_{g;1}(p,q)\geq C_*^{-1} g(w),
  \end{equation*}
  where $C_*$ is the positive constant in the definition of $1$-adaptedness.
\end{proof}

Using the same argument, given a point $p\in K_w$ and a point $q\in K$ with $3^{-|w|-1}<\|p-q\|_\infty\leq 3^{-|w|}$, we have $D_g(p,q)\asymp g(w)$, where $\|p-q\|_\infty=\max\{|x_p-x_q|,|y_p-y_q|\}$.
By this, we can immediately obtain the quasisymmetric equivalence of the metrics $D_g$ and $d_\infty$, where $d_\infty$ is the metric on $K$ defined by $d_\infty(p,q)=\|p-q\|_\infty$. The following definition is from \cite{Kig12} for a metric space $M$ with two metrics $d$ and $\rho$. Some very interesting results on the quasisymmetric equivalence of Sierpinski carpets can be found in \cite{BM13, BM18pre} and the references therein.

\begin{definition}
$\rho$ is said to be quasisymmetric to $d$ if there exists a homeomorphism $h$ from $[0,\infty)$ to itself with $h(0)=0$ such that, for any $t>0$, $\rho(p,s)<h(t)\rho(p,q)$ whenever $d(p,s)<td(p,q)$.
\end{definition}
\begin{proposition}\label{thm-qs}
For $(a,b)\in \sigma$, $D_g$ is quasisymmetric to $d_\infty$ on $K$. As a result, $D_g$ is quasisymmetric to the classical Euclidean metric.
\end{proposition}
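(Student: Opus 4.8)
The plan is to establish quasisymmetry by leveraging the local comparison estimate $D_g(p,q)\asymp g(w)$ that was derived immediately after Corollary~\ref{cor-adapt} for a point $p\in K_w$ and a point $q$ with $3^{-|w|-1}<\|p-q\|_\infty\leq 3^{-|w|}$. The strategy is to translate this estimate into a comparison between $D_g$-distances and $d_\infty$-distances that depends only on the ratio of the $d_\infty$-distances, which is exactly what a quasisymmetry gauge function $h$ must encode. The key mechanism is that the two weight ratios $a$ and $b$ lie strictly between $0$ and $1$, so that passing from scale $3^{-n}$ to scale $3^{-n-1}$ in the Euclidean (sup) metric corresponds to multiplying the $D_g$-diameter by a factor lying in the fixed interval $[\min\{a,b\},\max\{a,b\}]$; iterating this gives the needed uniform control.

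First I would fix $p,s,q\in K$ with $d_\infty(p,s)<t\,d_\infty(p,q)$ for some $t>0$, and choose integers $n,n'$ so that $3^{-n-1}<d_\infty(p,q)\le 3^{-n}$ and $3^{-n'-1}<d_\infty(p,s)\le 3^{-n'}$; note $t<1$ forces $n'\ge n$ up to an additive constant, while large $t$ allows $n'<n$. Selecting cells $K_w$ and $K_{w'}$ of lengths $n$ and $n'$ respectively, each containing $p$, the displayed estimate yields $D_g(p,q)\asymp g(w)$ and $D_g(p,s)\asymp g(w')$ with implied constants independent of $p,q,s$. Since $g(w)=\prod r_{w_k}$ with each factor in $\{a,b\}$, and since $w'$ and $w$ can both be taken as nested cells around $p$ (so that one refines the other), the ratio $g(w')/g(w)$ is a product of $|n'-n|$ factors, each in $[\min\{a,b\},\max\{a,b\}]$. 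Hence
\begin{equation*}
  \frac{D_g(p,s)}{D_g(p,q)}\asymp \frac{g(w')}{g(w)}\le C\,(\max\{a,b\})^{\,n'-n}\quad\text{when } n'\ge n,
\end{equation*}
and the exponent $n'-n$ is in turn controlled by $t$ because $3^{-n'}/3^{-n}\asymp d_\infty(p,s)/d_\infty(p,q)<t$ forces $3^{n-n'}<Ct$, i.e. $n'-n\ge \log_3(1/(Ct))$. Defining $h(t)$ by assembling these bounds — essentially $h(t)=C'\max\{t^{\,\theta},t^{\,\theta'}\}$ for suitable positive exponents $\theta,\theta'$ coming from $\log(\max\{a,b\})/\log 3$ and $\log(\min\{a,b\})/\log 3$ — gives a homeomorphism of $[0,\infty)$ fixing $0$ and satisfying the quasisymmetry inequality. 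The final sentence, that $D_g$ is quasisymmetric to the Euclidean metric, follows because $d_\infty$ and the Euclidean metric are bi-Lipschitz equivalent on $\mathbb R^2$, and bi-Lipschitz equivalence trivially preserves quasisymmetry.

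The main obstacle I anticipate is the bookkeeping needed to handle both regimes $n'\ge n$ and $n'<n$ uniformly and to verify that the resulting $h$ is genuinely a homeomorphism with $h(0)=0$ rather than merely a one-sided bound; in particular one must ensure the comparison cells can be chosen nested around the common point $p$ so that $g(w')/g(w)$ really is a monotone product of factors in $[\min\{a,b\},\max\{a,b\}]$, and one must absorb the multiplicative constants from the two-sided estimate $D_g\asymp g(w)$ into $h$ without destroying the requirement $h(0)=0$. A clean way to finesse this is to work with power-type gauge functions and to note that only the dependence on the ratio $t=d_\infty(p,s)/d_\infty(p,q)$ matters, so that all the $\asymp$ constants collapse into the multiplicative constant $C'$ in front of $h$.
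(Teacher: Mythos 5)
Your proposal is correct and follows essentially the same route as the paper: both rest on the adaptedness-derived comparison $D_g(p,q)\asymp g(w)$ at the scale $3^{-|w|}\asymp d_\infty(p,q)$, both convert the scale gap between $d_\infty(p,s)$ and $d_\infty(p,q)$ into a power of $a\vee b$ (for $t\le 1$) or $a\wedge b$ (for $t>1$), and both arrive at the gauge $h(t)=C\max\{t^{\kappa_1},t^{\kappa_2}\}$ with $\kappa_i=\log(a\vee b)/(-\log 3)$ and $\log(a\wedge b)/(-\log 3)$. The paper simply carries out explicitly the two-regime bookkeeping you flagged as the remaining obstacle.
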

\begin{proof}
Clearly, $d_\infty$ is equivalent to the Euclidean metric. Thus it suffices to prove that there exist constants $C>0$ and $\kappa_1\geq\kappa_2>0$ such that for any $t>0$, $\|p-s\|_\infty<t\|p-q\|_\infty$ implies $D_g(p,s)<C\max\{t^{\kappa_1},t^{\kappa_2}\}D_g(p,q)$.

We first assume that $t\leq1$. Let $n,m\geq0$ be integers such that $3^{-n-1}<t\leq 3^{-n}$, and $3^{-m-1}<\|p-q\|_\infty\leq3^{-m}$. Let $w$ be a word such that $|w|=m$ and $p\in K_w$. Then $D_g(p,q)\asymp g(w)$. For any $s\in K$ such that $\|p-s\|_\infty<t\|p-q\|_\infty\leq 3^{-n-m}$, by using
$(a\vee b)^n \leq t^{-\log(a\vee b)/\log 3}\cdot (a\vee b)^{-1},$
 we have
\begin{align*}
  D_g(p,s)\leq C_1 g(w)\left(a\vee b\right)^n \leq C_2 D_g(p,q)t^{\frac{\log(a\vee b)}{-\log3}},
\end{align*}
where $a\vee b=\max\{a,b\}$, and $C_1,C_2$ are two positive constants independent of $p,q,s$.

Now we assume that $t>1$. Let $n,m\geq 0$ be two integers such that $3^{-n-1}<t^{-1}\leq 3^{-n}$, and $3^{-m-1}<\|p-s\|_\infty\leq3^{-m}$. Let $w$ be a word such that $|w|=m$ and $p\in K_w$. Then $D_g(p,s)\asymp g(w)$.
Let $q\in K$ be such that $\|p-s\|_\infty<t\|p-q\|_\infty$. Then $\|p-q\|_\infty>t^{-1}\|p-s\|_\infty>3^{-m-n-2}$ so that
\begin{align*}
D_g(p,q)\geq C_3 g(w)\left(a\wedge b\right)^n\geq C_4 D_g(p,s)t^{\frac{\log(a\wedge b)}{-\log3}},
\end{align*}
where $a\wedge b=\min\{a,b\}$, and $C_3,C_4$ are two positive constants independent of $p,q,s$.

Combining above two cases, our assertion follows by letting $C=\max\{C_2,C_4^{-1}\}$, $\kappa_1={\frac{\log(a\vee b)}{-\log3}}$ and $\kappa_2={\frac{\log(a\wedge b)}{-\log3}}$.
\end{proof}


\subsection{chain condition and heat kernel bounds}\label{sec3.2}
In order to obtain the two-sided sub-Gaussian heat kernel estimates, especially in obtaining the off-diagonal lower bound from near diagonal lower bounds by using a standard ``chain argument", one requires the metric to satisfy the \textit{chain condition}, see for example \cite{GHL}.
\begin{definition}
A metric space $(M,d)$ is said to satisfy the chain condition if there exists a constant $C>0$ such that for any two points $p,q\in M$ and for any positive integer $n$ there exists a sequence $\{q_i\}_{i=0}^n$ of points in $M$ such that $q_0=p$, $q_n=q$ and
\begin{equation}\label{eq:chain-cond}
d(q_i,q_{i+1})\leq C\frac{d(p,q)}n,\ \text{ for all }i=0,1,\cdots,n-1.
\end{equation}
\end{definition}
On the Sierpinski carpet, we know from above studies that there are many choices of $(a,b)$ to construct a metric $D_g$. However, the more interesting case is that $(a,b)$ are on the \textit{critical lines} $I_1=\{(a,b):\ 0<a\leq b<1,\ 2a+b=1\}$ and $I_2=\{(a,b):\ 0<b\leq a<1,\ a+2b=1\}$. We will show that a metric $D_g$ satisfies the chain condition if and only if $(a,b)\in I_1\cup I_2$.

\begin{theorem}\label{th3.6}
For $(a,b)\in \sigma$, $D_g$ satisfies the chain condition if and only if $(a,b)\in I_1\cup I_2$.
\end{theorem}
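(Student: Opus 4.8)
The plan is to prove both directions by exploiting the self-similar structure and the adaptedness result (Theorem~\ref{thm-adapt}), which lets me replace the metric $D_g$ by the more computable quantity $g(w)$ up to constants via Corollary~\ref{cor-adapt}. The chain condition requires, for each $n$, a sequence of points with consecutive $D_g$-distances bounded by $C\,D_g(p,q)/n$. The natural strategy is to reduce everything to a single canonical pair of points, say $p_1$ and $p_3$ (or $p_2$ and $p_8$), connected along a straight line segment contained in $K$, and to analyze how finely one can subdivide such a segment at geometric scales.

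For the \textbf{``if'' direction}, suppose $(a,b)\in I_1$, so $2a+b=1$ with $a\le b$. The key observation is that along the bottom segment $\overline{p_1p_3}$, the cells $\gamma_n$ of level $n$ that cover it have total weight $g(\gamma_n)=(2a+b)^n=1$ for every $n$. This tells me that each successive level refines the segment into pieces of comparable $D_g$-length, and the ratio between scales is controlled. First I would show that for points $p,q$ lying on a common horizontal or vertical line segment inside $K$, one can interpolate at dyadic-type (base-$3$) scales: choosing $n$ and taking the level-$k$ subdivision points of the covering chain, each consecutive pair has $D_g$-distance comparable to $g$ of a single cell, namely $\asymp a^i b^{n-i}$ factors. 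Because on the critical line the total weight telescopes exactly to a constant, the individual segment lengths are each $\asymp D_g(p,q)/(\text{number of pieces})$ up to a uniform constant, which is precisely the chain condition. I would then promote this from the canonical segments to arbitrary $p,q\in K$ using the adaptedness of $D_g$ (so that $D_g(p,q)$ is realized up to a constant by a chain of bounded combinatorial length), connecting $p$ to $q$ through $O(1)$ such straight segments and concatenating the interpolating sequences, adjusting $C$ accordingly.

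For the \textbf{``only if'' direction}, suppose $(a,b)\in\sigma$ but $(a,b)\notin I_1\cup I_2$, i.e. $2a+b>1$ and $a+2b>1$ strictly. Here I would exhibit a pair of points for which the chain condition fails. The idea is that strict inequality forces the weights to \emph{decay} geometrically as one refines: the minimal-weight chain covering a segment at level $n$ has total weight shrinking like $\max\{(2a+b)^{-1},\dots\}$ factors away from $1$, so there is a genuine gap between the $D_g$-diameter of a cell and the $D_g$-distance across its neighbors. Concretely, for $p=p_1$ and $q=p_3$, I would argue via Corollary~\ref{cor-adapt} that any point $x$ on a geodesic-like chain sits in some cell $K_w$ with $\diam_g(K_w)\asymp g(w)$, and when $2a+b>1$ the jump in $D_g$ between adjacent refinement points cannot be made uniformly as small as $D_g(p,q)/n$ without forcing the number of cells to grow faster than their weights shrink, contradicting the bounded-oscillation requirement~\eqref{eq:chain-cond}. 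The cleanest route is to fix $n$, suppose a valid chain $\{q_i\}$ exists, and count: each step has $D_g$-length $\le C\,D_g(p_1,p_3)/n$, so each $q_i$ lies in a cell whose weight is $\lesssim 1/n$, forcing level $\gtrsim \log n$; but then the accumulated weight along any connected path between the two sides, estimated from below by the ``three-to-one'' mechanism of Lemma~\ref{lemma24} (resp. Lemma~\ref{lemma4}), exceeds $C\,D_g(p_1,p_3)/n$ times $n$ by a factor diverging with $n$ when the inequality is strict.

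The main obstacle I anticipate is the ``only if'' direction: making the counting argument fully rigorous requires carefully relating the \emph{number} of interpolation points at a given $D_g$-scale to the \emph{level} of cells and then summing weights with the correct geometric rate, distinguishing the critical rate $2a+b=1$ (where the sum stays bounded, allowing uniform subdivision) from the supercritical rate $2a+b>1$ (where consecutive scales differ by a fixed factor $>1$, so one cannot achieve arbitrarily fine \emph{uniform} subdivisions). I expect the heart of the proof to be a clean quantitative statement: for a segment realized by a weight-balanced chain, the minimal achievable uniform mesh at step count $n$ behaves like $D_g(p,q)\cdot n^{-\log 3/\log(1/\lambda)}$ for an appropriate $\lambda$, which equals the required $D_g(p,q)/n$ exactly on the critical lines and is strictly coarser off them. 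Verifying this dichotomy, and confirming that adaptedness transfers the estimate faithfully from canonical segments to general point pairs, is where the real work lies.
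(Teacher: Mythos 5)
There is a genuine gap in your ``if'' direction. You claim that, along $\overline{p_1p_3}$ with $2a+b=1$, the level-$n$ covering chain has total weight $1$ and therefore ``the individual segment lengths are each $\asymp D_g(p,q)/(\text{number of pieces})$ up to a uniform constant.'' This is a non sequitur: a constant total weight does not make the individual weights comparable to the average. The level-$n$ cells meeting $\overline{p_1p_3}$ have weights $a^ib^{n-i}$ ranging from $a^n$ to $b^n$, while the average is $3^{-n}$; if $b>1/3$ (which happens for most of $I_1$), the step across the cell $K_{[2]^n}$ has $D_g$-length $\asymp b^n\gg 3^{-n}$, so the uniform level-$n$ subdivision violates \eqref{eq:chain-cond} with $n$ replaced by $3^n$ points. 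The fix, which is what the paper's proof of Lemma~\ref{lemma3.5} actually does, is to refine \emph{adaptively}: at each stage subdivide only the cell of currently maximal weight (using $2a+b=1$, resp.\ $a+2b=1$, to keep the total weight from growing), which forces all cells in the chain to have weights within a bounded ratio of one another; one also needs to check that the number of points grows by a bounded factor per step so that all integers $n$ are covered. Your reduction from canonical segments to general $p,q$ via $1$-adaptedness (Theorem~\ref{thm-adapt}) is fine in spirit, but without the weight-equalization step the core estimate fails.

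Your ``only if'' direction is a sketch that you yourself flag as unfinished. The counting idea can be made rigorous (one needs a quantitative refinement of Lemma~\ref{lemma24}/Lemma~\ref{lemma4}: a chain between $p_1$ and $p_3$ all of whose cells have level at least $m$ has total weight at least $(2a+b)^m$, plus the reflection machinery of Section~2 to reduce arbitrary chains to on-line ones while preserving levels), but that is substantial additional work. The paper avoids all of it with a much shorter argument: for $2a+b>1$ take $\lambda>1$ with $2a^\lambda+b^\lambda=1$, so $(\tilde a,\tilde b)=(a^\lambda,b^\lambda)\in I_1$ and $D_{\tilde g}$ is a genuine metric by Theorem~\ref{th2.2}; superadditivity gives $D_{\tilde g}(p,q)\leq D_g(p,q)^\lambda$, and applying this to a chain of $n$ points satisfying \eqref{eq:chain-cond} yields $D_{\tilde g}(p,q)\leq n^{1-\lambda}\bigl(C D_g(p,q)\bigr)^\lambda\to 0$, a contradiction. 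You would do well to adopt that comparison trick rather than the counting argument.
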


We separate the proof of Theorem \ref{th3.6} into the following two lemmas.
\begin{lemma}\label{lemma3.5}
For $(a,b)\in I_1\cup I_2$, $D_g$ satisfies the chain condition.
\end{lemma}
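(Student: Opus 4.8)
The plan is to prove that for $(a,b) \in I_1 \cup I_2$ the metric $D_g$ satisfies the chain condition by exploiting the self-similar structure directly. By symmetry it suffices to treat $(a,b) \in I_1$, so $2a+b = 1$ and $a \leq b$. The key structural observation is Lemma~\ref{lemma24}: when $2a+b=1$, the chain $\gamma_n$ running straight along the bottom edge $\overline{p_1p_3}$ has total weight exactly $1$ at every level $n$, and moreover $D_g(p_1,p_3) = 1$ by \eqref{eq:Dgp1p3}. This equality (rather than mere comparability) is what makes the chain condition achievable on the critical line: the geodesic weight is realized, not just bounded below.

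First I would establish a ``self-similar interpolation'' scheme. Given $p,q \in K$ and a target number of points $n$, the idea is to choose a scale $k$ so that $g(w) \approx D_g(p,q)/n$ for cells $K_w$ of length $k$, then walk from $p$ to $q$ through a sequence of such cells each of comparable $D_g$-diameter. Concretely, using Corollary~\ref{cor-adapt} we have $\diam_g(K_w) \asymp g(w)$, and adaptedness (Theorem~\ref{thm-adapt}) guarantees that near-optimal chains exist whose length is controlled. The points $q_i$ would be taken as successive vertices along such a near-geodesic chain, subdividing it into $n$ pieces each of $D_g$-length at most $C\,D_g(p,q)/n$. The crucial point where $2a+b=1$ enters is that along any coordinate line segment, consecutive $k$-cells have weights differing only by the factor $a/b$ or $b/a$ (Proposition~\ref{th2.1}), so the individual steps $D_g(q_i,q_{i+1})$ stay uniformly comparable to each other and to $D_g(p,q)/n$; there is no accumulation of a geometric factor that would blow up as the scale refines, precisely because the total weight is scale-invariant on the critical line.

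The main technical step, and the hardest part, will be handling arbitrary pairs $p,q$ rather than the special endpoints $p_1,p_3$. For general $p,q$ one must connect them through a union of at most countably many horizontal and vertical segments inside $K$ (as used at the end of Lemma~\ref{lemma1}), and then verify that the interpolation can be carried out uniformly over all such configurations with a single constant $C$. The obstacle is controlling the number and the weights of the connecting segments: I expect to reduce to a bounded number of ``straight'' traversals by first passing to a cell $K_w$ of the appropriate scale containing $p$ (with $g(w) \asymp D_g(p,q)$ via the quasisymmetry estimate in Proposition~\ref{thm-qs}), and then recursing into subcells. The self-similar identity lets the construction at scale $k$ be a rescaled copy of the base case, so the per-step bound \eqref{eq:chain-cond} propagates with the same constant.

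Thus the skeleton is: reduce by symmetry to $I_1$; use $D_g(p_1,p_3)=1$ together with the level-$n$ boundary chain $\gamma_n$ of weight $1$ to produce, for the model segment, an $n$-point sequence with each gap $\asymp 1/n$; invoke Proposition~\ref{th2.1} to certify uniform comparability of consecutive gaps; then dilate and concatenate via self-similarity and the triangle inequality to handle an arbitrary pair $p,q$, using Corollary~\ref{cor-adapt} and Proposition~\ref{thm-qs} to calibrate scales so that $g(w) \asymp D_g(p,q)/n$ holds at the chosen level. I would finish by collecting all the comparability constants into a single $C$ independent of $p$, $q$, and $n$, which yields the chain condition.
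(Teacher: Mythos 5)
There is a genuine gap at the heart of your interpolation scheme. You propose to pick a combinatorial scale $k$ with $g(w)\approx D_g(p,q)/n$ for cells of length $k$ and to claim that consecutive gaps along the resulting chain are uniformly comparable. But on the critical line the weights of cells of a \emph{fixed} length $k$ along a coordinate segment are not comparable: along $\overline{p_1p_3}$ the level-$k$ cells have weights ranging from $a^k$ to $b^k$, so the max-to-min ratio is $(b/a)^k$, which is unbounded as $k\to\infty$ (unless $a=b=1/3$). Proposition~\ref{th2.1} only controls the ratio of two \emph{adjacent} cells; these factors accumulate along the chain. Keeping the total weight equal to $1$ (which is what $2a+b=1$ gives you) does not yield individual gaps of size $\lesssim D_g(p,q)/n$; for that you need the maximal cell weight to be comparable to the average, which fails for uniform-level subdivision. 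The paper avoids this by never refining to a uniform level: at each step it locates the cell of \emph{currently maximal weight} in the chain and trisects only that one (inserting at most four points, so the chain lengths $n_k$ grow with bounded ratio $n_{k+1}/n_k\le 3$), and it is this greedy, weight-driven refinement that forces all cells of $\gamma_k$ to have weights within a fixed factor $a^{-1}$ of one another after finitely many steps, whence each gap is at most $a^{-1}c_2 D_g(p,q)/n_k$.

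Two further problems. First, the reduction ``by symmetry it suffices to treat $I_1$'' is not available: swapping $a$ and $b$ is not induced by any isometry of $K$, since $a$ is attached to corner cells and $b$ to edge-midpoint cells. The paper must (and does) treat $I_2$ separately, replacing the model segments by diagonal segments of slope $\pm1$ joining midpoints of cell edges, because on $I_2$ the scale-invariant identity is $a+2b=1$, which is the total weight of the three subcells crossed by such a diagonal, not by a coordinate edge. Second, your plan to handle general $p,q$ by concatenating countably many horizontal and vertical traversals threatens the uniformity of the constant $C$ in the chain condition; the paper instead starts from a chain of at most four $m$-cells between $p$ and $q$ supplied directly by $1$-adaptedness (Theorem~\ref{thm-adapt}), with $3^{-m}<|x_p-x_q|\le 3^{-m+1}$, and runs the refinement inside that chain, so no infinite concatenation is ever needed.
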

\begin{lemma}\label{lemma3.6}
For $(a,b)\in \sigma\setminus(I_1\cup I_2)$, $D_g$ does not satisfy the chain condition.
\end{lemma}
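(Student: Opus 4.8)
The final statement to prove is Lemma~\ref{lemma3.6}: for $(a,b)\in\sigma\setminus(I_1\cup I_2)$, the metric $D_g$ does \emph{not} satisfy the chain condition. The plan is to exploit the strict inequalities $2a+b>1$ and $a+2b>1$ that hold precisely on $\sigma\setminus(I_1\cup I_2)$, and to show they force a gap in the attainable step-sizes of chains, so that no uniform constant $C$ in \eqref{eq:chain-cond} can work. The strategy is to negate the chain condition directly: I will produce a specific pair of points $p,q$ (natural candidates are two opposite corners, or $p_1$ and $p_3$, so that the geodesic-type distance is controlled by Lemma~\ref{lemma24} and Corollary~\ref{cor-adapt}) together with a sequence $n\to\infty$ for which every admissible $n$-point sequence must contain a pair $q_i,q_{i+1}$ with $D_g(q_i,q_{i+1})$ bounded below by a fixed positive constant, while $D_g(p,q)/n\to 0$.

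First I would make the strict inequality quantitative. Since $(a,b)\notin I_1\cup I_2$, we have $\delta:=\min\{2a+b-1,\,a+2b-1\}>0$. Using the ``three-to-one'' estimates behind Lemmas~\ref{lemma4} and \ref{lemma24} together with the adaptedness established in Theorem~\ref{thm-adapt} and the diameter comparison $\diam_g(K_w)\asymp g(w)$ from Corollary~\ref{cor-adapt}, I would show that $D_g$ has a scale-gap property: there is a constant $\theta\in(0,1)$ (depending on $\delta$) such that whenever a cell $K_w$ is crossed from one side to the opposite side, the $D_g$-length picked up is at least $\theta\cdot g(w)$, yet the diameter of any single cell $K_{wj}$ one level deeper is at most $(a\vee b)\,g(w)$. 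The key point is that the ratio $\diam_g(K_{wj})/\diam_g(K_w)$ cannot be made arbitrarily small \emph{and} simultaneously allow a cheap crossing: strict doubling of the weight inequalities means the minimal crossing weight and the per-cell diameter are comparable up to a factor bounded away from $1$, so distances cannot be subdivided into arbitrarily fine comparable pieces along a straight segment.

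Concretely, I would take $p,q$ to be the endpoints of a boundary segment, say $p=p_1$, $q=p_3$, so that by Lemma~\ref{lemma24} and the adaptedness we know $D_g(p,q)\asymp 1$. The obstruction comes from the fact that any chain of points from $p$ to $q$ must cross the vertical lines $x=3^{-k}\cdot j$, and crossing a cell of level $k$ costs at least $\theta(2a+b)^k$ or $\theta(a+2b)^k$ in $D_g$-distance by the ``three-to-one'' lower bound; because $2a+b>1$ strictly, these crossing costs do not decay to match the uniform upper bound $C/n$ that the chain condition demands once $n$ is large. I would quantify this by fixing the scale $k=k(n)$ at which $3^{-k}\approx 1/n$ and showing that \emph{any} point sequence $\{q_i\}$ with $q_0=p$, $q_n=q$ must contain a consecutive pair straddling a full $k$-cell in the Euclidean sense, hence with $D_g(q_i,q_{i+1})\gtrsim (a\vee b)^{-k}\cdot$(minimal crossing weight)$\gtrsim \theta\,(2a+b)^k / (a\vee b)^k$, and this lower bound does not go to $0$ like $1/n$, contradicting \eqref{eq:chain-cond}.

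The main obstacle I expect is making the ``straddling forces a crossing'' step rigorous: I must rule out the possibility that the sequence sneaks through thin corridors or re-uses the reflection tricks of Section~\ref{sec2} to cheaply traverse the gap. The clean way to handle this is via the adaptedness (Theorem~\ref{thm-adapt}): adaptedness converts the $D_g$-distance between two nearby points into the weight of a single cell of the appropriate scale, so I can replace the soft metric statement by a counting statement about cell-weights, and then the strict inequality $\min\{2a+b,\,a+2b\}>1$ gives a clean exponential mismatch between the number of required steps and the size each step can have. I would therefore organize the proof as: (i) extract $\delta>0$ and the comparison constants from Corollary~\ref{cor-adapt} and Theorem~\ref{thm-adapt}; (ii) fix the corner pair $p_1,p_3$ and note $D_g(p_1,p_3)\asymp 1$; (iii) for each large $n$, use adaptedness to show any candidate chain sequence has some consecutive pair with $D_g(q_i,q_{i+1})$ bounded below independent of $n$; (iv) conclude that no uniform $C$ exists, so the chain condition fails.
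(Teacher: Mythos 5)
There is a genuine gap. The central quantitative claim in your step (iii) --- that for large $n$ every admissible sequence $q_0=p_1,\dots,q_n=p_3$ must contain a consecutive pair with $D_g(q_i,q_{i+1})$ bounded below \emph{independently of $n$} --- is false: since $D_g$ is continuous with respect to the Euclidean metric and $\overline{p_1p_3}\subset K$, the points $q_i=(i/n,0)$ form a sequence all of whose consecutive $D_g$-gaps tend to $0$ as $n\to\infty$. The failure of the chain condition is not that steps cannot be made small, but that they cannot be made as small as $C\,D_g(p,q)/n$; note also that naive counting gives no contradiction, since the triangle inequality only forces $\sum_i D_g(q_i,q_{i+1})\geq D_g(p,q)$, which is perfectly compatible with $n$ steps each of size $C\,D_g(p,q)/n$. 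Your intermediate display is also incoherent as written: $(2a+b)^k$ is the total weight of the entire level-$k$ bottom chain, not the cost of a single crossing, and the ratio $(2a+b)^k/(a\vee b)^k$ tends to $+\infty$, so it cannot be a lower bound for a single step $D_g(q_i,q_{i+1})\leq 1$.

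What is missing is the identification of the correct decay rate, and the paper obtains it by a snowflake comparison that your outline never invokes. For $(a,b)\in\sigma_1\setminus I_1$ one takes $\lambda>1$ with $2a^\lambda+b^\lambda=1$, so that $(\widetilde a,\widetilde b)=(a^\lambda,b^\lambda)\in I_1$ and $\widetilde g(w)=g(w)^\lambda$; the superadditivity of $t\mapsto t^\lambda$ over the terms of a chain yields $\widetilde g(\gamma)\leq g(\gamma)^\lambda$ and hence $D_{\widetilde g}(p,q)\leq D_g(p,q)^\lambda$, whence any sequence with $D_g(q_i,q_{i+1})\leq C\,D_g(p,q)/n$ forces $0<D_{\widetilde g}(p,q)\leq n^{1-\lambda}\bigl(C\,D_g(p,q)\bigr)^{\lambda}\to 0$, a contradiction (Theorem~\ref{th2.2} guarantees that $D_{\widetilde g}$ is a genuine metric, so $D_{\widetilde g}(p,q)>0$). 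Any direct multi-scale counting argument of the kind you sketch would in effect have to rediscover the exponent $1/\lambda$ as the optimal step decay; without the comparison metric $D_{\widetilde g}$ your outline has no mechanism to produce the needed lower bound, so the proof does not go through as proposed.
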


\begin{proof}[Proof of Lemma \ref{lemma3.5}.]
We separate the proof into two cases, that is $(a,b)\in I_1$ and $(a,b)\in I_2$. By the definition of chain condition, it suffices to show that there exists $c_0>1$ and a sequence of increasing positive integers $\{n_k\}_{k\geq 1}$ with $\lim_{k\to\infty} n_k=+\infty$ and $\frac{n_{k+1}}{n_k}\leq c_0$ for all $k$, such that \eqref{eq:chain-cond} holds for all $n_k$'s. We will prove this by inductive construction.

\medskip
\textbf{Case 1.} $(a,b)\in I_1$. Let $p\not=q\in K$. Without loss of generality, we may assume that $|x_p-x_q|\geq |y_p-y_q|$. Let $m\in \mathbb{Z}^+$ satisfying $3^{-m}< |x_p-x_q|\leq 3^{-m+1}$. Then it is easy to check that there exists a positive integer $2\leq n_1\leq 4$, and
a sequence of points $\{q_{1,i}\}_{i=0}^{n_1}\subset K$, and a chain $\gamma_1=\big(w(1,1),\ldots,w(1,n_1)\big)$ between $p$ and $q$, with $|w(1,i)|=m$ for all $i$, and $q_{1,0}=p,q_{1,n_1}=q$, both $q_{1,i-1}$ and $q_{1,i}$ are contained in $K_{w(1,i)}$ for $1\leq i\leq n_1$, and $\overline{q_{1,i-1}q_{1,i}}$ is  one of the four edges of the square $F_{w(1,i)}([0,1]^2)$ for $1< i<n_1$. In the case that $n_1=2$, we require that $q_{1,1}$ is both a vertex of $F_{w(1,1)}([0,1]^2)$ and a vertex of $F_{w(1,2)}([0,1]^2)$. Since $D_{g}$ is $1$-adapted to $g$,  there exists a constant $c_1>0$ which is only dependent on  $a$ and $b$, such that $g(\gamma_1)\leq c_1 D_g(p,q)$.

Assume that for some positive integer $k$, we have already constructed a finite sequence $\{q_{k,i}\}_{i=0}^{n_k}\subset K$ and a chain $\gamma_k=\{w(k,i)\}_{i=1}^{n_k}\subset \Sigma^*$ between $p$ and $q$, with the property that $q_{k,0}=p$, $q_{k,n_k}=q$, both $q_{k,i-1}$ and $q_{k,i}$ are two points in $K_{w(k,i)}$ for $1\leq i\leq n_k,$ and $\overline{q_{k,i-1}q_{k,i}}$ is  one of the four edges of the square $F_{w(k,i)}([0,1]^2)$ for $1< i<n_k$.

Now we will insert at most four points to $\{q_{k,i}\}_{i=0}^{n_k}$ to obtain a new sequence $\{q_{k+1,i}\}_{i=0}^{n_{k+1}}$ and a corresponding chain $\gamma_{k+1}$. Pick $\tau_k\in \{1,2,\ldots,n_k\}$, such that
\begin{equation}\label{eq:ik-pick}
   g(w(k,\tau_k))=\max\{g(w(k,j)):\, 1\leq j\leq n_k\}.
\end{equation}

In the case that $1<\tau_k<n_k$, from the inductive construction, $\overline{q_{k,\tau_{k-1}}q_{k,\tau_k}}$ is one of the four edges of the square $F_{w(k,\tau_k)}([0,1]^2)$.  Let $q_{k+1,j}=q_{k,j}$ for $j< \tau_k$,  $q_{k+1,j+2}=q_{k,j}$ for $j\geq \tau_k$, and
\begin{equation}\label{eq:insert-pts}
     q_{k+1,\tau_k}=\frac{2}{3}q_{k,\tau_k-1}+\frac{1}{3}q_{k,\tau_k}, \quad q_{k+1,\tau_k+1}=\frac{1}{3}q_{k,\tau_k-1}+\frac{2}{3}q_{k,\tau_k}.
\end{equation}
Denote $w(k+1,j)=w(k,j)$ for $j< \tau_k$ and $w(k+1,j+2)=w(k,j)$ for $j> \tau_k$. For $j=\tau_k,\tau_k+1,\tau_k+2$, we denote by $w(k+1,j)$ the subcell of $w(k,\tau_k)$ containing $q_{k+1,j-1}$ and $q_{k+1,j}$. Let $n_{k+1}=n_k+2$ and $\gamma_{k+1}=\big(w(k+1,1),\ldots,w(k+1,n_{k+1})\big)$.
From $2a+b=1$, we have $g(\gamma_{k+1})=g(\gamma_k)$.

In the case that $\tau_k=1$, from the inductive construction, we have $q_{k,0},q_{k,1}\in K_{w(k,1)}$. Then there exists an integer $1\leq \ell_k\leq 5$ and points $q_{k+1,0}, q_{k+1,1},\ldots,q_{k+1,\ell_k}$, and subcells $w(k+1,j)$, $j=1,\ldots,\ell_k$, of $w(k,1)$ with $|w(k+1,j)|=|w(k,1)|+1$ for all $1\leq j\leq \ell_k$, such that $q_{k+1,0}=q_{k,0}$, $q_{k+1,\ell_k}=q_{k,1}$, and $q_{k+1,0},q_{k+1,1}\in K_{w(k+1,1)}$, and $\overline{q_{k+1,j-1}q_{k+1,j}}$ is one of the four edges of the $F_{w(k+1,j)}([0,1]^2)$ for $j=2,\ldots,\ell_k$. In the case that $\ell_k=2$, we require that $q_{k+1,1}$ is both a vertex of $F_{w(k+1,1)}([0,1]^2)$ and a vertex of  $F_{w(k+1,2)}([0,1]^2)$. From $2a+b=1$, we can also require that
$$\sum_{j=1}^{\ell_k} g(w_{k+1,j})\leq (1+a+b)g(w_{k,1}).$$
Denote $q_{k+1,j+\ell_k-1}=q_{k,j}$ and $w_{k+1,j+\ell_k-1}=w_{k,j}$ for $j\geq 2$. Define $n_{k+1}=n_k+\ell_k-1$ and $\gamma_{k+1}=\big(w(k+1,1),\ldots,w(k+1,n_{k+1})\big)$. Then
\begin{equation}\label{eq:chain-cond-proof}
  g(\gamma_{k+1})\leq (a+b)g(w_{k,1})+g(\gamma_k).
\end{equation}

In the case that $\tau_k=n_k$, we can do similarly as in the case $\tau_k=1$.

By induction, for each positive integer $k$, we obtain a sequence of points $\{q_{k,i}\}_{i=0}^{n_k}$ and a chain $\gamma_k=\big(w(k,1),\ldots,w(k,n_k)\big)$. From $n_1\geq 2$ and $0\leq n_{k+1}-n_k\leq 4$, we have $n_{k+1}/n_k\leq 3$. From the definition of $\gamma_k$ and \eqref{eq:chain-cond-proof}, we can obtain that
\begin{equation*}
  g(\gamma_k)\leq (a+b)\max\{g(w_{1,1}), g(w_{1,n_1})\}\sum_{i=0}^\infty b^i+g(\gamma_1)\leq \Big(\frac{a+b}{1-b}+1\Big)g(\gamma_1).
\end{equation*}

Denote $c_2=c_1\big(1+(a+b)/(1-b)\big)$. Then $g(\gamma_k)\leq c_2 D_g(p,q)$ for all $k$.

Notice that $g(w(1,i))\leq (b/a)^2 g(w(1,j))$ for all $i,j\in\{1,\ldots,n_1\}$. Define
\begin{equation*}
  k_0=\min\{k\in \mathbb{Z}^+:\, b^k (b/a)^2<1\}.
\end{equation*}
Then in the case that $k\geq  k_0$, we have $w(k,j)\not\in \{w(1,1),\ldots,w(1,n)\}$ for all $1\leq j\leq n_k$. That is, for each $1\leq j\leq n_k$, there exists $j^\prime < k$, such that $w(k,j)$ is a son of $w(j^\prime,\tau_{j^\prime})$. It implies that for $1\leq i\leq n_k$,
\begin{equation*}
  g(w(k,i))\leq g(w(j^\prime,\tau_{j^\prime}))\leq a^{-1} g(w(k,j)), \quad j\in\{1,\ldots,n_k\}.
\end{equation*}
Thus
\begin{equation*}
  g(w(k,i))\leq \frac{a^{-1}}{n_k}\sum_{j=1}^{n_k} g(w(k,j)) = \frac{a^{-1}}{n_k}g(\gamma_k)\leq \frac{a^{-1}c_2}{n_k}D_g(p,q)
\end{equation*}
so that $D_g(q_{k,i-1},q_{k,i})\leq g(w(k,i))\leq  \frac{a^{-1}c_2}{n_k}D_g(p,q)$.

In the case that $k<k_0$, we have $n_k\leq 4k+2< 4k_0+2$. Thus, for each $1\leq i\leq n_k$,
\begin{equation*}
  D_g(q_{k,i-1},q_{k,i})\leq g(\gamma_k)\leq c_2 D_g(p,q)\leq \frac{(4k_0+2)c_2}{n_k} D_g(p,q)
\end{equation*}
so that the chain condition holds.

\textbf{Case 2.} $(a,b)\in I_2$. We use the same method as in Case 1. The difference is in the following.

In the first step, for each $1< i< n_1$, we require that $\overline{q_{1,i-1}q_{1,i}}$ is one of following line segments contained in the square $F_{w(1,i)}([0,1]^2)$: the endpoints of the line segment are the midpoint of edges of the square, and the slope of the line segment takes values in $\{1,-1\}$.


From step $k$ to step $k+1$, if $1<\tau_k<n_k$, then we use \eqref{eq:insert-pts} to insert two points. By using $2b+a=1$, we have $g(\gamma_{k+1})=g(\gamma_k)$. If $\tau_k=1$, we insert points similarly to the first step. That is, we require that for all $j=2,\ldots,\ell_k$,  $\overline{q_{k+1,j-1}q_{k+1,j}}$ is one of following line segments in the square $F_{w(k+1,j)}([0,1]^2)$: the endpoints of the line segment are the midpoint of edges of the square, and the slope of the line segment takes values in $\{1,-1\}$. The case $\tau_k=n_k$ is same as the case $\tau_k=1$.

Using the same argument as in Case~1, we can see that the chain condition holds.
\end{proof}

\begin{proof}[Proof of Lemma \ref{lemma3.6}.]
We prove the lemma by using the argument of contradiction. From $\sigma\setminus(I_1\cup I_2)=(\sigma_1\setminus I_1)\cup (\sigma_2\setminus I_2)$, we consider the following two cases.

\medskip
\textbf{Case 1.} $(a,b)\in \sigma_1\setminus I_1$, i.e., $b\geq a$ and $2a+b>1$. We assume that $D_g$ satisfies the chain condition.
Let $\widetilde a=a^\lambda$ and $\widetilde b=b^\lambda$, where $\lambda>1$ is the unique solution of the equation
\begin{equation*}
2a^\lambda+b^\lambda=1,
\end{equation*}
then $(\widetilde{a},\widetilde{b})\in I_1$. Let  $\widetilde{g}$ and $D_{\widetilde{g}}$ be the weight function and its induced metric associated with  $(\widetilde{a},\widetilde{b})$. Clearly we have
\begin{equation*}
\widetilde{g}(w)=g(w)^\lambda,
\end{equation*}
for any $w\in\Sigma^*$,
and hence for any chain $\gamma=\big(w(1),w(2),\ldots,w(m)\big)$,
\begin{equation*}
\widetilde{g}(\gamma)=\sum\limits_{i=1}^m\widetilde{g}\big(w(i)\big)=\sum\limits_{i=1}^m g\big(w(i)\big)^\lambda\leq \left(\sum\limits_{i=1}^m g\big(w(i)\big)\right)^\lambda=g(\gamma)^\lambda.
\end{equation*}
This implies that for any $p,q\in K$, we have
\begin{equation}\label{eqeq11}
D_{\widetilde{g}}(p,q)\leq D_{g}(p,q)^\lambda.
\end{equation}

Fix $p\neq q\in K$. Then both $D_{g}(p,q)>0$ and $D_{\widetilde{g}}(p,q)>0$ hold. By the chain condition of $D_{g}$ as we assumed, there is $C>0$ such that for any positive integer $n$, there is a chain $p=q_0,q_1,\cdots,q_n=q$ such that
\begin{equation*}
D_{g}(q_i,q_{i+1})\leq C\frac{D_{g}(p,q)}n, \ \text{ for }\ i=0,1,\cdots,n-1.
\end{equation*}
By using \eqref{eqeq11} for each pair $(q_i,q_{i+1})$, we have
\begin{equation*}
D_{\widetilde{g}}(q_i,q_{i+1})\leq D_{g}(q_i,q_{i+1})^\lambda \leq  \left(C\frac{D_{g}(p,q)}n\right)^{\lambda},
\end{equation*}
and hence
\begin{equation*}
D_{\widetilde{g}}(p,q)\leq\sum\limits_{i=0}^{n-1}D_{\widetilde{g}}(q_i,q_{i+1})\leq n\left(C\frac{D_{g}(p,q)}n\right)^{\lambda}=n^{1-\lambda}\big(C\cdot D_{g}(p,q)\big)^{\lambda}.
\end{equation*}
By letting $n\rightarrow\infty$, we obtain that $D_{\widetilde{g}}(p,q)=0$. This is a contradiction to the fact that $D_{\widetilde{g}}(p,q)>0$.

\medskip
\textbf{Case 2.} $(a,b)\in \sigma_2\setminus I_2$, i.e., $a\geq b$ and $a+2b>1$. By letting $\widetilde a=a^\lambda$ and $\widetilde b=b^\lambda$ with $\lambda>1$ satisfying
\begin{equation*}
a^\lambda+2b^\lambda=1,
\end{equation*}
and using the same argument as in Case 1, we can also find  that $D_{g}$ does not satisfy the chain condition.
\end{proof}

Let $X_t$ be the standard Brownian motion on the Sierpinski carpet $K$ constructed in \cite{BB89, KZ}. Let $\mu$ be a self-similar measure on $K$ with positive probability weights $\{\mu_i\}_{i=1}^8$ satisfying $\sum_{i=1}^8\mu_i=1$.
From \cite[Theorem 3.4.5]{Kig09}, $\mu$ is volume doubling if and only if $\mu_1=\mu_3=\mu_5=\mu_7$, $\mu_2=\mu_6$ and $\mu_4=\mu_8$.
Here we just deal with the more restricted case that $\mu_2=\mu_4$, and hence $\mu_1=\mu_3=\mu_5=\mu_7$, $\mu_2=\mu_4=\mu_6=\mu_8$, and $\mu_1+\mu_2=1/4$. We now study the time change of the standard Brownian motion on $K$ via the measure $\mu$.

Let $\rho$ be the renormalization factor of the associated Dirichlet form $(\mathcal E,\mathcal F)$ on $L^2(K,\mu)$.
Since $\rho>1$, we define the \textit{effective resistance} $R(x,y)$ between $x$ and $y$ for $x,y\in K$ as follows:
if $x=y$, define $R(x,y)=0$; if $x\neq y$, let
\begin{equation*}
R(x,y)^{-1}:=\inf\{\mathcal E(u):\ u\in\mathcal F,u(x)=0,u(y)=1\}.
\end{equation*}
Then $R(\cdot,\cdot)$ is a metric on $K$, such that for any $x,y\in K$,
\begin{equation*}
R(x,y)\asymp |x-y|^\gamma,
\end{equation*}
where $\gamma=\log\rho/\log3$.

To obtain the two-sided sub-Gaussian heat kernel estimates, we intend to find a proper metric $d$ satisfying both the chain condition, and the following property: \textit{there exists a positive real number $\beta$ such that for all $x,y\in K$,
\begin{equation*}
R(x,y)\cdot \mu\Big(B\big(x,d(x,y)\big)\Big)\asymp d(x,y)^\beta.
\end{equation*}}
A good choice is to use the metrics constructed via self-similar weight functions.

 We simplify this into the following: find $a$ and $b$ such that $(a,b)\in I_1\cup I_2$, and for any integer $n\geq0$ and for any finite word $w$ with $|w|=n$,
\begin{equation*}
\rho^{-n}\cdot \mu_w=g_{a,b}(w)^\beta.
\end{equation*}
By solving this equation we get
\begin{equation}\label{eqab}
a=(\mu_1/\rho)^\frac1{\beta},\ b=(\mu_2/\rho)^\frac1{\beta},
\end{equation}
where $\beta$ is the unique positive number satisfying:
\begin{equation}\label{eqbeta}
\left(\frac{\mu_1\vee \mu_2}{\rho}\right)^\frac1{\beta}+2\left(\frac{\mu_1\wedge\mu_2}{\rho}\right)^\frac1{\beta}=1,
\end{equation}
with $\mu_1\vee \mu_2=\max\{\ \mu_1,\mu_2\}$ and $\mu_1\wedge \mu_2=\min\{\ \mu_1,\mu_2\}$.

\medskip

Let $d=D_{g_{a,b}}$ be the metric constructed by $g_{a,b}$ with $(a,b)$ as in \eqref{eqab}. Then by applying \cite[Theorem 3.2.3]{Kig09}, the heat kernel of the Dirichlet form $(\mathcal E,\mathcal{F})$ on $L^2(K,\mu)$ exists, and enjoys the following two-sided sub-Gaussian estimate:
\begin{equation*}
p_t(x,y)\asymp \frac{1}{V(x,t^{1/\beta})}\exp\left(-c\left(\frac{d(x,y)}{t^{1/\beta}}\right)^{\frac{\beta}{\beta-1}}\right),
\end{equation*}
where $\beta$ is given by the equation \eqref{eqbeta}, and $V(x,t^{1/\beta})=\mu(\{z: d(x,z)\leq t^{1/\beta}\})$.

\medskip

\noindent{\bf Acknowledgment.} The authors are indebted to Professor Jun Kigami for his valuable suggestions and comments, especially in bringing their attention to several references about the adaptedness and quasisymmetry of the metrics.

\bibliographystyle{amsplain}

\end{document}